\newcommand{\supC}{\mbox{$\sup {\rm C}$}}
\newcommand{\supCC}{\mbox{$\sup {\rm cC}$}}
\newcommand{\supcCSRO}{\mbox{$\sup {\rm cCSRO}$}}
\newcommand{\supCRO}{\mbox{$\sup {\rm CRO}$}}
\begin{document}

\begin{frontmatter}
\runtitle{A Note on Relative Observability in Coordination Control}
\title{A Note on Relative Observability in Coordination Control}

\thanks{Corresponding author: T. Masopust, TU Dresden, Germany}

\author{Jan Komenda}\ead{komenda{@}ipm.cz} 
\address{Institute of Mathematics, Academy of Sciences of the Czech Republic, {\v Z}i{\v z}kova 22, 616 62 Brno, Czech Republic}

\author{Tom{\' a}{\v s}~Masopust}\ead{masopust{@}math.cas.cz}
\address{Institute of Mathematics, Academy of Sciences of the Czech Republic, {\v Z}i{\v z}kova 22, 616 62 Brno, Czech Republic \and TU Dresden, Germany}

\author{Jan H. van Schuppen}\ead{jan.h.van.schuppen@xs4all.nl}
\address{Van Schuppen Control Research, Gouden Leeuw 143, 1103 KB, Amsterdam, The Netherlands}

\begin{keyword}                           
  Discrete-event systems; Coordination Control; Relative Observability.
\end{keyword}

\begin{abstract}
  Relative observability has been introduced and studied in the framework of partially observed discrete-event systems as a condition stronger than observability, but weaker than normality. However, unlike observability, relative observability is closed under language unions, which makes it interesting for practical applications. In this paper, we investigate this notion in the framework of coordination control. We prove that conditional normality is a stronger condition than conditional (strong) relative observability, hence conditional strong relative observability can be used in coordination control instead of conditional normality, and present a distributive procedure for the computation of a conditionally controllable and conditionally observable sublanguage of the specification that contains the supremal conditionally strong relative observable sublanguage.
\end{abstract} 

\end{frontmatter}

\section{Introduction}\label{intro}
  Supervisory control theory of discrete-event systems has been proposed in~\cite{RW89} as a formal approach to solve the safety issue and nonblockingness. Coordination control has been proposed for modular discrete-event systems in~\cite{KvS08} as a reasonable trade-off between a purely modular control synthesis, which is in some cases unrealistic, and a global control synthesis, which is naturally prohibitive for complexity reasons. The idea is to compute a coordinator that takes care of the communication between subsystems. This approach has been further developed in~\cite{automatica2011,JDEDS,cdc2014}. In~\cite{automatica2011}, a procedure for the distributive computation of the supremal conditionally-controllable sublanguages (the necessary and sufficient condition for the existence of a solution) of prefix-closed specification languages and controllers with complete observations has been proposed. The approach has been later extended to non-prefix-closed specification languages in~\cite{JDEDS}, and for partial observations in~\cite{cdc2014}.

  Relative observability has been introduced and studied in~\cite{caiCDC13} in the framework of partially observed discrete-event systems as a condition stronger than observability, but weaker than normality. Relative observability has been shown to be closed under language unions, which makes it an interesting notion that can replace normality in practical applications. Before relative observability, normality was the weakest notion known to be closed under language unions. 
  
  In this paper, we study the concept of relative observability in the coordination control framework. In the same manner as we have introduced the notions of conditional normality and conditional observability, we introduce and discuss the new concept of {\em conditional relative observability\/} in the coordination control framework. Surprisingly, compared to relative observability, conditional relative observability is not closed under language unions meaning that the supremal conditionally relative observable sublanguages do not always exist. Therefore, we further propose a stronger concept called {\em conditional strong relative observability}, which we show to be closed under language unions. Moreover, we prove that the previously defined notion of conditional normality~\cite{cdc2014} implies conditional (strong) relative observability, which means that conditional strong relative observability can be used in coordination control with partial observations instead of conditional normality, and we present a distributive procedure for the computation of a conditionally controllable and conditionally observable sublanguage of the specification that contains the supremal conditionally strong relative observable sublanguage.

\section{Preliminaries}\label{preliminaries}
  We first briefly recall the basic elements of supervisory control theory. The reader is referred to~\cite{CL08} for more details. Let $\Sigma$ be a finite nonempty set of {\em events}, and let $\Sigma^*$ denote the set of all finite words over $\Sigma$. The {\em empty word\/} is denoted by $\varepsilon$. 

  A {\em generator\/} is a quintuple $G=(Q,\Sigma, f, q_0, Q_m)$, where $Q$ is a finite nonempty set of {\em states}, $\Sigma$ is an {\em event set\/}, $f: Q \times \Sigma \to Q$ is a {\em partial transition function}, $q_0 \in Q$ is the {\em initial state}, and $Q_m\subseteq Q$ is the set of {\em marked states}. In the usual way, the transition function $f$ can be extended to the domain $Q \times \Sigma^*$ by induction. The behavior of $G$ is described in terms of languages. The language {\em generated\/} by $G$ is the set $L(G) = \{s\in \Sigma^* \mid f(q_0,s)\in Q\}$ and the language {\em marked\/} by $G$ is the set $L_m(G) = \{s\in \Sigma^* \mid f(q_0,s)\in Q_m\}\subseteq L(G)$.

  A {\em (regular) language\/} $L$ over an event set $\Sigma$ is a set $L\subseteq \Sigma^*$ such that there exists a generator $G$ with $L_m(G)=L$. The prefix closure of a language $L$ is the set $\overline{L}=\{w\in \Sigma^* \mid \text{there exists } u \in\Sigma^* \text{ such that } wu\in L\}$. A language $L$ is {\em prefix-closed\/} if $L=\overline{L}$.

  A {\em (natural) projection} $P: \Sigma^* \to \Sigma_o^*$, for some $\Sigma_o\subseteq \Sigma$, is a homomorphism defined so that $P(a)=\varepsilon$, for $a\in \Sigma\setminus \Sigma_o$, and $P(a)=a$, for $a\in \Sigma_o$. The {\em inverse image} of $P$, denoted by $P^{-1} : \Sigma_o^* \to 2^{\Sigma^*}$, is defined as $P^{-1}(s)=\{w\in \Sigma^* \mid P(w) = s\}$. The definitions can naturally be extended to languages. The {\em projection of a generator\/} $G$ is a generator $P(G)$ whose behavior satisfies $L(P(G))=P(L(G))$ and $L_m(P(G))=P(L_m(G))$.

  A {\em controlled generator\/} is a structure $(G,\Sigma_c,P,\Gamma)$, where $G$ is a generator over $\Sigma$, $\Sigma_c \subseteq \Sigma$ is the set of {\em controllable events}, $\Sigma_{u} = \Sigma \setminus \Sigma_c$ is the set of {\em uncontrollable events}, $P:\Sigma^*\to \Sigma_o^*$ is the projection, and $\Gamma = \{\gamma \subseteq \Sigma \mid \Sigma_{u}\subseteq\gamma\}$ is the {\em set of control patterns}. A {\em supervisor\/} for the controlled generator $(G,\Sigma_c,P,\Gamma)$ is a map $S:P(L(G)) \to \Gamma$. A {\em closed-loop system\/} associated with the controlled generator $(G,\Sigma_c,P,\Gamma)$ and the supervisor $S$ is defined as the minimal language $L(S/G) \subseteq \Sigma^*$ such that (i) $\varepsilon \in L(S/G)$ and (ii) if $s \in L(S/G)$, $sa\in L(G)$, and $a \in S(P(s))$, then $sa \in L(S/G)$. The marked behavior of the closed-loop system is defined as $L_m(S/G)=L(S/G)\cap L_m(G)$.
  
  Let $G$ be a generator over an event set $\Sigma$, and let $K\subseteq L_m(G)$ be a specification. The aim of supervisory control theory is to find a nonblocking supervisor $S$ such that $L_m(S/G)=K$; the nonblockingness means that $\overline{L_m(S/G)} = L(S/G)$, hence $L(S/G)=\overline{K}$. It is known that such a supervisor exists if and only if $K$ is 
  (i) {\em controllable\/} with respect to $L(G)$ and $\Sigma_u$, that is $\overline{K}\Sigma_u\cap L(G)\subseteq \overline{K}$, 
  (ii) {\em $L_m(G)$-closed}, that is $K = \overline{K}\cap L_m(G)$, and 
  (iii) {\em observable\/} with respect to $L(G)$, $\Sigma_o$, and $\Sigma_c$, that is for all words $s, s'\in \Sigma^*$ such that $Q(s) = Q(s')$ it holds that for all $\sigma\in \Sigma$, $s\sigma \in \overline{K}$, $s' \in \overline{K}$, and $s'\sigma \in L(G)$ imply that $s'\sigma \in \overline{K}$, where $Q:\Sigma^*\to \Sigma_o^*$. Note that it is sufficient to consider $\sigma\in\Sigma_c$, because for $\sigma\in\Sigma_u$ the condition follows from controllability, cf.~\cite{CL08}.
  
  The synchronous product of two languages $L_1\subseteq \Sigma_1^*$ and $L_2\subseteq \Sigma_2^*$ is defined by $L_1\parallel L_2=P_1^{-1}(L_1) \cap P_2^{-1}(L_2) \subseteq \Sigma^*$, where $P_i: \Sigma^*\to \Sigma_i^*$, for $i=1,2$, are projections to local event sets. In terms of generators, it is known that $L(G_1 \parallel G_2) = L(G_1) \parallel L(G_2)$ and $L_m(G_1 \parallel G_2)= L_m(G_1) \parallel L_m(G_2)$, see~\cite{CL08}.

\section{Coordination Control Framework}\label{sec:concepts}
  A language $K\subseteq (\Sigma_1\cup \Sigma_2)^*$ is {\em conditionally decomposable\/} with respect to event sets $\Sigma_1$, $\Sigma_2$, and $\Sigma_k$, where $\Sigma_1\cap \Sigma_2\subseteq \Sigma_k$, if $K = P_{1+k} (K)\parallel P_{2+k} (K)$, where $P_{i+k}: (\Sigma_1\cup \Sigma_2)^*\to (\Sigma_i\cup \Sigma_k)^*$ is a projection, for $i=1,2$. Note that $\Sigma_k$ can always be extended so that the language $K$ becomes conditionally decomposable. A polynomial algorithm to compute such an extension can be found in~\cite{SCL12}. On the other hand, however, to find the minimal extension (with respect to set inclusion) is NP-hard~\cite{JDEDS}.

  Now we recall the coordination control problem that is discussed in this paper.
  \begin{prob}\label{problem:controlsynthesis}
    Consider two generators $G_1$ and $G_2$ over the event sets $\Sigma_1$ and $\Sigma_2$, respectively, and a generator $G_k$ (called a {\em coordinator\/}) over the event set $\Sigma_k$ satisfying the inclusions $\Sigma_1\cap\Sigma_2\subseteq \Sigma_k\subseteq \Sigma_1\cup\Sigma_2$. Let $K \subseteq L_m(G_1 \parallel G_2 \parallel G_k)$ be a specification language. Assume that $K$ and its prefix-closure $\overline{K}$ are conditionally decomposable with respect to event sets $\Sigma_1$, $\Sigma_2$, and $\Sigma_k$. The aim of coordination control is to determine nonblocking supervisors $S_1$, $S_2$, and $S_k$ for the respective generators such that $L_m(S_k/G_k)\subseteq P_k(K)$, $L_m(S_i/ [G_i \parallel (S_k/G_k) ])\subseteq P_{i+k}(K)$, for $i=1,2$, and $L_m(S_1/ [G_1 \parallel (S_k/G_k) ]) \parallel L_m(S_2/ [G_2 \parallel (S_k/G_k) ]) = K$.
  \end{prob}
  
  One possible way how to construct a coordinator is to set $G_k=P_k(G_1)\parallel P_k(G_2)$, see~\cite{automatica2011,JDEDS} for more details. An advantage of this construction is that the coordinator does not affect the system, that is, $G_1\parallel G_2\parallel G_k = G_1\parallel G_2$.

  The notion of conditional controllability introduced in~\cite{KvS08} and further studied in~\cite{automatica2011,JDEDS,cdc2014} plays the central role in coordination control. In what follows, we use the notation $\Sigma_{i,u}=\Sigma_i\cap \Sigma_u$ to denote the set of uncontrollable events of the event set $\Sigma_i$. 
  
  Let $G_1$ and $G_2$ be generators over the event sets $\Sigma_1$ and $\Sigma_2$, respectively, and let $G_k$ be a coordinator over the event set $\Sigma_k$. Let $P_k:\Sigma^*\to \Sigma_k^*$ and $P_{i+k}:\Sigma^*\to (\Sigma_i\cup\Sigma_k)^*$ be projections. A language $K\subseteq L_m(G_1\parallel G_2\parallel G_k)$ is {\em conditionally controllable\/} with respect to generators $G_1$, $G_2$, $G_k$ and uncontrollable event sets $\Sigma_{1,u}$, $\Sigma_{2,u}$, $\Sigma_{k,u}$ if
  (i) $P_k(K)$ is controllable with respect to $L(G_k)$ and $\Sigma_{k,u}$ and
  (ii) $P_{i+k}(K)$ is controllable with respect to $L(G_i) \parallel \overline{P_k(K)}$ and $\Sigma_{i+k,u}$, for $i=1,2$, where $\Sigma_{i+k,u}=(\Sigma_i\cup \Sigma_k)\cap \Sigma_u$.
  The supremal conditionally controllable sublanguage always exists and equals to the union of all conditionally controllable sublanguages~\cite{JDEDS}. 
  
  Consider the setting of Problem~\ref{problem:controlsynthesis} and define the languages
  \begin{equation}
    \begin{aligned}\label{eqCC}
      \supC_k     & =  \supC(P_k(K), L(G_k), \Sigma_{k,u})\\[3pt]
      \supC_{i+k} & =  \supC(P_{i+k}(K), L(G_i) \parallel \overline{\supC_k}, \Sigma_{i+k,u})
    \end{aligned}
  \end{equation}
  for $i=1,2$, where $\supC(K,L,\Sigma_u)$ denotes the supremal controllable sublanguage of $K$ with respect to $L$ and $\Sigma_u$, see~\cite{CL08}. Let $\supCC(K, L, (\Sigma_{1,u}, \Sigma_{2,u}, \Sigma_{k,u}))$ denote the supremal conditionally controllable sublanguage of $K$ with respect to $L=L(G_1\parallel G_2\parallel G_k)$ and sets of uncontrollable events $\Sigma_{1,u}$, $\Sigma_{2,u}$, $\Sigma_{k,u}$. In~\cite{JDEDS}, we have shown that $P_k(\supC_{i+k})\subseteq \supC_k$ and that if in addition the converse inclusion also holds, then $\supC_{1+k} \parallel \supC_{2+k} = \supCC(K, L, (\Sigma_{1,u}, \Sigma_{2,u}, \Sigma_{k,u}))$. This has been further improved by introducing a weaker condition for nonconflicting supervisors in~\cite{cdc2014}. Recall that two languages $L_1$ and $L_2$ are {\em nonconflicting\/} if $\overline{L_1\parallel L_2} = \overline{L_1} \parallel \overline{L_2}$.
  
  \begin{thm}[\cite{cdc2014}]\label{thmNEW}
    Consider the setting of Problem~\ref{problem:controlsynthesis} and the languages defined in~(\ref{eqCC}). Assume that the languages $\supC_{1+k}$ and $\supC_{2+k}$ are nonconflicting. If $P_k(\supC_{1+k}) \cap P_k(\supC_{2+k})$ is controllable with respect to $L(G_k)$ and $\Sigma_{k,u}$, then $\supC_{1+k} \parallel \supC_{2+k} = \supCC(K, L, (\Sigma_{1,u}, \Sigma_{2,u}, \Sigma_{k,u}))$, where $L=L(G_1\parallel G_2\parallel G_k)$.
    \qed
  \end{thm}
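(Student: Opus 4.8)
The plan is to prove the two inclusions making up the claimed equality separately; I abbreviate $M := \supC_{1+k}\parallel\supC_{2+k}$. Throughout I would rely on three standard ingredients: (a) since $\Sigma_1\cap\Sigma_2\subseteq\Sigma_k$, the projection $P_k$, resp.\ $P_{i+k}$, distributes over the synchronous product of a language over $\Sigma_1\cup\Sigma_k$ with one over $\Sigma_2\cup\Sigma_k$, so that $P_k(M)=P_k(\supC_{1+k})\cap P_k(\supC_{2+k})$ and $P_{i+k}(M)=\supC_{i+k}\parallel P_k(\supC_{j+k})$ for $\{i,j\}=\{1,2\}$; (b) the inclusion $P_k(\supC_{i+k})\subseteq\supC_k$ recalled in the text; and (c) that $\supC_{i+k}$ is, by~(\ref{eqCC}), supremal controllable with respect to $L(G_i)\parallel\overline{\supC_k}$, while $\overline{\supC_k}\subseteq\overline{P_k(K)}\subseteq L(G_k)$ because $K\subseteq L_m(G_1\parallel G_2\parallel G_k)$.

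For the inclusion $M\subseteq\supCC(K,L,(\Sigma_{1,u},\Sigma_{2,u},\Sigma_{k,u}))$, I would first note that $M\subseteq P_{1+k}(K)\parallel P_{2+k}(K)=K\subseteq L_m(G_1\parallel G_2\parallel G_k)$ by conditional decomposability of $K$, so by supremality it suffices to show $M$ is conditionally controllable. Condition~(i) is immediate: $P_k(M)=P_k(\supC_{1+k})\cap P_k(\supC_{2+k})$, which is controllable with respect to $L(G_k)$ and $\Sigma_{k,u}$ by the hypothesis. For condition~(ii), fix $\{i,j\}=\{1,2\}$; nonconflictingness of $\supC_{1+k}$ and $\supC_{2+k}$ together with ingredient~(a) gives $\overline{P_{i+k}(M)}=\overline{\supC_{i+k}}\parallel\overline{P_k(\supC_{j+k})}$. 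Take $s\sigma$ with $s\in\overline{P_{i+k}(M)}$, $\sigma\in\Sigma_{i+k,u}$, and $s\sigma\in L(G_i)\parallel\overline{P_k(M)}$. Then $P_i(s\sigma)\in L(G_i)$, and, using $P_k(M)\subseteq P_k(\supC_{j+k})\subseteq\supC_k$, also $P_k(s\sigma)\in\overline{P_k(M)}\subseteq\overline{P_k(\supC_{j+k})}\subseteq\overline{\supC_k}$; hence $s\sigma\in L(G_i)\parallel\overline{\supC_k}$, so controllability of $\supC_{i+k}$ with respect to this plant yields $s\sigma\in\overline{\supC_{i+k}}$, and combined with $P_k(s\sigma)\in\overline{P_k(\supC_{j+k})}$ this gives $s\sigma\in\overline{\supC_{i+k}}\parallel\overline{P_k(\supC_{j+k})}=\overline{P_{i+k}(M)}$. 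Thus $M$ is conditionally controllable.

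For the reverse inclusion, let $N:=\supCC(K,L,(\Sigma_{1,u},\Sigma_{2,u},\Sigma_{k,u}))$, which is conditionally controllable and satisfies $N\subseteq K$. From $P_k(N)\subseteq P_k(K)$ and controllability of $P_k(N)$ with respect to $L(G_k)$, $\Sigma_{k,u}$, supremality gives $P_k(N)\subseteq\supC_k$. The crucial step is then to upgrade controllability of $P_{i+k}(N)$ from the plant $L(G_i)\parallel\overline{P_k(N)}$ to the larger plant $L(G_i)\parallel\overline{\supC_k}$: given $s\sigma$ in the latter with $s\in\overline{P_{i+k}(N)}$ and $\sigma\in\Sigma_{i+k,u}$, if $\sigma\notin\Sigma_k$ then $P_k(s\sigma)=P_k(s)\in\overline{P_k(N)}$ outright, whereas if $\sigma\in\Sigma_{k,u}$ then $P_k(s\sigma)=P_k(s)\sigma\in\overline{\supC_k}\subseteq L(G_k)$, so controllability of $P_k(N)$ with respect to $L(G_k)$ forces $P_k(s\sigma)\in\overline{P_k(N)}$; in either case $s\sigma\in L(G_i)\parallel\overline{P_k(N)}$, hence $s\sigma\in\overline{P_{i+k}(N)}$ by controllability of $P_{i+k}(N)$ with respect to that plant. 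Therefore $P_{i+k}(N)$ is a controllable sublanguage of $P_{i+k}(K)$ with respect to $L(G_i)\parallel\overline{\supC_k}$, so $P_{i+k}(N)\subseteq\supC_{i+k}$, and consequently $N\subseteq P_{1+k}(N)\parallel P_{2+k}(N)\subseteq\supC_{1+k}\parallel\supC_{2+k}=M$. Combining the two inclusions proves the theorem.

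I expect condition~(ii) of the first inclusion to require the most care: one must keep careful track of which alphabet each intermediate word lives over, and invoke nonconflictingness at exactly the point where the nontrivial inclusion $\overline{\supC_{i+k}}\parallel\overline{P_k(\supC_{j+k})}\subseteq\overline{P_{i+k}(M)}$ is used (the other direction of that identity is trivial). It is also worth remarking that both hypotheses --- nonconflictingness and controllability of $P_k(\supC_{1+k})\cap P_k(\supC_{2+k})$ --- enter only in the first inclusion; the reverse inclusion holds with no extra assumptions.
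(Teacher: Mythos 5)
The paper itself gives no proof of this statement---it is quoted from \cite{cdc2014} with only a \qed---so there is nothing internal to compare against; judged on its own, your argument is correct and complete. Both directions check out: the unconditional inclusion $\supCC\subseteq\supC_{1+k}\parallel\supC_{2+k}$ via supremality of $\supC_k$ and $\supC_{i+k}$ (with the correct ``plant upgrade'' from $L(G_i)\parallel\overline{P_k(N)}$ to $L(G_i)\parallel\overline{\supC_k}$ using controllability of $P_k(N)$ with respect to $L(G_k)$), and the converse via conditional controllability of $M$, where you invoke nonconflictingness exactly where it is needed (for $\overline{P_{i+k}(M)}=\overline{\supC_{i+k}}\parallel\overline{P_k(\supC_{j+k})}$) and the hypothesis on $P_k(\supC_{1+k})\cap P_k(\supC_{2+k})$ exactly for condition~(i). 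The auxiliary facts you lean on are all legitimately available in this framework: the distributivity $P_k(L_1\parallel L_2)=P_k(L_1)\parallel P_k(L_2)$ holds because $(\Sigma_1\cup\Sigma_k)\cap(\Sigma_2\cup\Sigma_k)=\Sigma_k$, the inclusion $P_k(\supC_{i+k})\subseteq\supC_k$ is recalled in the text, and $N\subseteq P_{1+k}(N)\parallel P_{2+k}(N)$ is Lemma~\ref{lem11}. Your closing observation that the two hypotheses are needed only for the inclusion $M\subseteq\supCC$ matches the paper's remark that the containment $\supCC\subseteq\supC_{1+k}\parallel\supC_{2+k}$ was already established unconditionally in \cite{JDEDS}.
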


  For coordination control, the notion of conditional observability is of the same importance as observability for supervisory control theory. 
  
  Let $G_1$ and $G_2$ be generators over the event sets $\Sigma_1$ and $\Sigma_2$, respectively, and let $G_k$ be a coordinator over $\Sigma_k$. A language $K\subseteq L_m(G_1\parallel G_2\parallel G_k)$ is {\em conditionally observable\/} with respect to generators $G_1$, $G_2$, $G_k$, controllable sets $\Sigma_{1,c}$, $\Sigma_{2,c}$, $\Sigma_{k,c}$, and projections $Q_{1+k}$, $Q_{2+k}$, $Q_{k}$, where $Q_i: \Sigma_i^*\to \Sigma_{i,o}^*$, for $i=1+k,2+k,k$, if
  (i) $P_k(K)$ is observable with respect to $L(G_k)$, $\Sigma_{k,c}$, and $Q_{k}$, and
  (ii) $P_{i+k}(K)$ is observable with respect to $L(G_i) \parallel \overline{P_k(K)}$, $\Sigma_{i+k,c}$, and $Q_{i+k}$, for $i=1,2$, where $\Sigma_{i+k,c}=\Sigma_c \cap (\Sigma_i \cup \Sigma_k)$.
  
  Analogously to the notion of $L_m(G)$-closed languages, we recall the notion of conditionally-closed languages defined in~\cite{ifacwc2011}. A nonempty language $K$ over the event set $\Sigma$ is {\em conditionally closed\/} with respect to generators $G_1$, $G_2$, $G_k$ if
  (i) $P_k(K)$ is $L_m(G_k)$-closed, and
  (ii) $P_{i+k}(K)$ is $(L_m(G_i) \parallel P_k(K))$-closed, for $i=1,2$.
  
  \begin{thm}[\cite{cdc2014}]\label{thm1}
    Consider the setting of Problem~\ref{problem:controlsynthesis}. There exist nonblocking supervisors $S_1$, $S_2$, $S_k$ as required in Problem~\ref{problem:controlsynthesis} if and only if the specification $K$ is
    (i) conditionally controllable with respect to generators $G_1$, $G_2$, $G_k$ and $\Sigma_{1,u}$, $\Sigma_{2,u}$, $\Sigma_{k,u}$, 
    (ii) conditionally closed with respect to generators $G_1$, $G_2$, $G_k$, and 
    (iii) conditionally observable with respect to $G_1$, $G_2$, $G_k$, event sets $\Sigma_{1,c}$, $\Sigma_{2,c}$, $\Sigma_{k,c}$, and projections $Q_{1+k}$, $Q_{2+k}$, $Q_{k}$ from $\Sigma_i^*$ to $\Sigma_{i,o}^*$, for $i=1+k,2+k,k$.
    \qed
  \end{thm}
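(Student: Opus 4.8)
The plan is to reduce this existence result to three independent instances of the classical supervisory control problem recalled in Section~\ref{preliminaries} (cf.~\cite{CL08}) --- one at the coordinator level and one at each plant level --- and to glue the three solutions together using conditional decomposability of $K$. Recall that the classical result states that, for a plant $G$ and $K\subseteq L_m(G)$, a nonblocking supervisor $S$ with $L_m(S/G)=K$ exists if and only if $K$ is controllable, $L_m(G)$-closed, and observable with respect to the corresponding data.

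For the \emph{if} direction, assume $K$ is conditionally controllable, conditionally closed, and conditionally observable. Conditions~(i) of these three notions are, verbatim, the hypotheses of the classical result for the plant $G_k$ and the language $P_k(K)\subseteq L_m(G_k)$, so there is a nonblocking supervisor $S_k$ with $L_m(S_k/G_k)=P_k(K)$ and hence $L(S_k/G_k)=\overline{P_k(K)}$. Consequently the generator $G_i\parallel(S_k/G_k)$ generates $L(G_i)\parallel\overline{P_k(K)}$ and marks $L_m(G_i)\parallel P_k(K)$, and a short computation with projections and synchronous products (using $K\subseteq L_m(G_1\parallel G_2\parallel G_k)$) gives $P_{i+k}(K)\subseteq L_m(G_i)\parallel P_k(K)$. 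Now conditions~(ii) of the three notions are exactly the hypotheses of the classical result for the plant $G_i\parallel(S_k/G_k)$ and the language $P_{i+k}(K)$, so there are nonblocking supervisors $S_i$ with $L_m(S_i/[G_i\parallel(S_k/G_k)])=P_{i+k}(K)$, $i=1,2$. Since $K$ is conditionally decomposable, $L_m(S_1/[G_1\parallel(S_k/G_k)])\parallel L_m(S_2/[G_2\parallel(S_k/G_k)])=P_{1+k}(K)\parallel P_{2+k}(K)=K$, so $S_1,S_2,S_k$ satisfy all the requirements of Problem~\ref{problem:controlsynthesis}.

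For the \emph{only if} direction, suppose nonblocking $S_1,S_2,S_k$ as in Problem~\ref{problem:controlsynthesis} are given; write $R_k=L_m(S_k/G_k)$ and $R_i=L_m(S_i/[G_i\parallel(S_k/G_k)])$. The first step is to upgrade the inclusions $R_k\subseteq P_k(K)$ and $R_i\subseteq P_{i+k}(K)$ to equalities. Since every word of a synchronous product projects into each of its factors, $R_1\parallel R_2=K$ gives $P_{i+k}(K)=P_{i+k}(R_1\parallel R_2)\subseteq R_i$, hence $R_i=P_{i+k}(K)$. Nonblockingness of $S_k$ gives $L(S_k/G_k)=\overline{R_k}$, so projecting the inclusion $\overline{R_i}\subseteq L(G_i)\parallel\overline{R_k}$ onto $\Sigma_k$ yields $\overline{P_k(K)}=\overline{P_k(R_i)}\subseteq\overline{R_k}$, and combined with $R_k\subseteq P_k(K)$ we get $\overline{R_k}=\overline{P_k(K)}$; therefore $R_k=\overline{R_k}\cap L_m(G_k)=\overline{P_k(K)}\cap L_m(G_k)=P_k(K)$, the last equality holding because $P_k(K)\subseteq L_m(G_k)$ (again from $K\subseteq L_m(G_1\parallel G_2\parallel G_k)$). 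Now $S_k$ is a nonblocking supervisor realizing $L_m(S_k/G_k)=P_k(K)$, so the converse half of the classical result forces conditions~(i) of the three notions; and $S_i$ is a nonblocking supervisor for the plant $G_i\parallel(S_k/G_k)$ --- which generates $L(G_i)\parallel\overline{P_k(K)}$ and marks $L_m(G_i)\parallel P_k(K)$ --- realizing $L_m(S_i/[G_i\parallel(S_k/G_k)])=P_{i+k}(K)$, so the same converse forces conditions~(ii). Hence $K$ is conditionally controllable, conditionally closed, and conditionally observable.

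The routine bookkeeping aside, the one place that needs genuine care is the pair of language identities $R_i=P_{i+k}(K)$ and $R_k=P_k(K)$ in the necessity part: this is exactly where conditional decomposability of $K$ and the containment $K\subseteq L_m(G_1\parallel G_2\parallel G_k)$ are used essentially, together with the fact --- which rests on the nonblockingness of $S_k$ --- that $G_i\parallel(S_k/G_k)$ has precisely the generated and marked behaviors claimed. Once these identities are in place, the whole theorem is nothing more than applying the classical controllability--observability theorem three times and translating between the ``conditional'' vocabulary and the classical one.
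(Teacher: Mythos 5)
The paper states this theorem without proof --- it is imported verbatim from \cite{cdc2014} --- so there is no in-paper argument to compare against. Your reduction to three instances of the classical controllability/$L_m(G)$-closedness/observability theorem (once for $G_k$ with specification $P_k(K)$, once for each $G_i\parallel(S_k/G_k)$ with specification $P_{i+k}(K)$) is the standard and correct way to prove it, and the two points that actually need care are handled properly: the containments $P_k(K)\subseteq L_m(G_k)$ and $P_{i+k}(K)\subseteq L_m(G_i)\parallel P_k(K)$ needed to invoke the classical result in the sufficiency part, and the identities $R_i=P_{i+k}(K)$ and $R_k=P_k(K)$ in the necessity part, where you correctly use $P_{i+k}(R_1\parallel R_2)\subseteq R_i$, the nonblockingness of $S_k$, and $P_k(K)\subseteq L_m(G_k)$. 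I see no gap.
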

  
  Note that for prefix-closed languages, we do not need nonconflictingness and conditional closedness, because they are automatically satisfied for prefix-closed languages.

\section{Conditional Relative Observability}\label{sec:cro}
  As mentioned above, relative observability (with respect to $C$, or just $C$-observability) has been introduced and studied in~\cite{caiCDC13} as a weaker condition than normality, but stronger than observability. It has been shown there that supremal relatively observable sublanguages exist. 
  
  In this section, we introduce the notion of conditional $C$-observability (or conditional relative observability with respect to $C$) in a similar way we have defined conditional observability or conditional normality, as a counterpart of relative observability for coordination control. First, we recall the definition of relative observability.

  Let $K \subseteq C \subseteq L_m(G)$. The language $K$ is {\em $C$-observable\/} with respect to a plant $G$ and a projection $Q:\Sigma^*\to\Sigma_o^*$ (we also say that $K$ is relatively observable with respect to $C$, $G$, and $Q$) if for all words $s, s'\in \Sigma^*$ such that $Q(s) = Q(s')$ it holds that for all $\sigma\in \Sigma$, $s\sigma \in \overline{K}$, $s' \in \overline{C}$, and $s'\sigma \in L(G)$ imply that $s'\sigma \in \overline{K}$. Note that for $C=K$ the definition coincides with the definition of observability.

  \begin{defn}\label{def:conditionalRobservability}
    Let $G_1$ and $G_2$ be generators over the event sets $\Sigma_1$ and $\Sigma_2$, respectively, and let $G_k$ be a coordinator over the event set $\Sigma_k$. Let $K\subseteq C \subseteq L_m(G_1\parallel G_2\parallel G_k)$. The language $K$ is {\em conditionally $C$-observable\/} with respect to generators $G_1, G_2, G_k$, and projections $Q_{1+k}, Q_{2+k}, Q_{k}$, where $Q_i: \Sigma_i^*\to \Sigma_{i,o}^*$, for $i=1+k,2+k,k$ if
    \begin{enumerate}
      \item $P_k(K)$ is $P_k(C)$-observable with respect to $L(G_k)$ and $Q_{k}$, and
      \item $P_{i+k}(K)$ is $P_{i+k}(C)$-observable with respect to $L(G_i) \parallel \overline{P_k(K)}$ and $Q_{i+k}$, for $i=1,2$.
    \end{enumerate}
  \end{defn}

  As relative observability implies observability~\cite{caiCDC13}, we immediately obtain the following result from Theorem~\ref{thm1}.
  \begin{thm}
    Consider the setting of Problem~\ref{problem:controlsynthesis}. Let $K\subseteq C\subseteq L_m(G_1\parallel G_2\parallel G_k)$. If the specification $K$ is conditionally controllable with respect to $G_1, G_2, G_k$ and $\Sigma_{1,u}, \Sigma_{2,u}, \Sigma_{k,u}$, conditionally closed with respect to $G_1, G_2, G_k$, and conditionally $C$-observable with respect generators $G_1, G_2, G_k$ and projections $Q_{1+k}, Q_{2+k}, Q_{k}$ from $\Sigma_i^*$ to $\Sigma_{i,o}^*$, for $i=1+k,2+k,k$, then there exist nonblocking supervisors $S_1$, $S_2$, $S_k$ as required in Problem~\ref{problem:controlsynthesis}.
    \qed
  \end{thm}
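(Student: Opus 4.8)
The plan is to reduce the statement to Theorem~\ref{thm1}: once we know that conditional $C$-observability implies conditional observability, the three hypotheses required by Theorem~\ref{thm1} (conditional controllability, conditional closedness, conditional observability) are all in force, and the existence of the supervisors $S_1,S_2,S_k$ follows immediately. The only nontrivial ingredient is the elementary observation, already made in~\cite{caiCDC13}, that relative observability with respect to $C$ implies observability. I would establish it directly from the definitions: since $\overline{K}\subseteq\overline{C}$, the premise ``$s'\in\overline{C}$'' in the defining implication of $C$-observability is weaker than the premise ``$s'\in\overline{K}$'' in the defining implication of observability; hence every pair $s,s'$ with $Q(s)=Q(s')$ that is tested for observability is also tested for $C$-observability with the same conclusion, so restricting the universally quantified $s'$ to $\overline{K}$ turns the $C$-observability condition into the observability condition. (As usual, it suffices to verify the condition for $\sigma\in\Sigma_c$, since for $\sigma\in\Sigma_u$ it follows from controllability, cf.~\cite{CL08}.)

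Before applying this, I would record the supporting language inclusions that make relative observability well posed for each component: applying $P_k$ and $P_{i+k}$ to $K\subseteq C\subseteq L_m(G_1\parallel G_2\parallel G_k)$ and using $L_m(G_1\parallel G_2\parallel G_k)\subseteq P_k^{-1}(L_m(G_k))$ (and the analogous inclusion for $P_{i+k}$), one obtains $P_k(K)\subseteq P_k(C)\subseteq L_m(G_k)$ and $P_{i+k}(K)\subseteq P_{i+k}(C)$ inside the corresponding plant language, for $i=1,2$.

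Then I would apply the reduction componentwise. From item~1 of Definition~\ref{def:conditionalRobservability}, $P_k(K)$ is $P_k(C)$-observable with respect to $L(G_k)$ and $Q_k$, hence observable with respect to $L(G_k)$, $\Sigma_{k,c}$, and $Q_k$. From item~2, $P_{i+k}(K)$ is $P_{i+k}(C)$-observable with respect to $L(G_i)\parallel\overline{P_k(K)}$ and $Q_{i+k}$, hence observable with respect to $L(G_i)\parallel\overline{P_k(K)}$, $\Sigma_{i+k,c}$, and $Q_{i+k}$, for $i=1,2$. By the definition of conditional observability, this means precisely that $K$ is conditionally observable with respect to $G_1,G_2,G_k$, the controllable sets $\Sigma_{1,c},\Sigma_{2,c},\Sigma_{k,c}$, and the projections $Q_{1+k},Q_{2+k},Q_k$. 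Combining this with the assumed conditional controllability and conditional closedness, all hypotheses of Theorem~\ref{thm1} hold, and that theorem yields the desired nonblocking supervisors.

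I do not expect a genuine obstacle here; the argument is essentially bookkeeping on top of Theorem~\ref{thm1} together with the ``relative observability implies observability'' implication. The only point that warrants a moment's care is precisely that last set of language inclusions, ensuring that the three components of the specification sit correctly between the $K$-images, the $C$-images, and the relevant plant languages, so that the notion of relative observability is genuinely applicable to each of them.
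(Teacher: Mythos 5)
Your proposal is correct and follows essentially the same route as the paper, which obtains this theorem immediately from Theorem~\ref{thm1} by invoking the fact that relative observability implies observability (so each component condition of conditional $C$-observability yields the corresponding component of conditional observability). Your explicit justification --- that quantifying $s'$ over the larger set $\overline{C}\supseteq\overline{K}$ makes $C$-observability a stronger condition than observability --- is exactly the point the paper delegates to~\cite{caiCDC13}.
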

  
  In the following example we show that, unlike relative observability, conditional relative observability is not closed under language unions.
  \begin{exmp}\label{ex01}
    Let $L(G_1) = \overline{\{a, \tau a\}}$, $L(G_2) = \overline{\{\tau\}}$, $K_1 = \{a\}$, $K_2 = \{\tau\}$, $\Sigma_k=\{\tau\}$ and $\Sigma_o=\{a\}$. Define $G_k = P_k(G_1) \parallel P_k(G_2)$. It can be verified that both $K_1$ and $K_2$ are conditionally $C$-observable, for $C=K_1\cup K_2$. We now show that $K_1\cup K_2$ is not conditionally $C$-observable. To see this, let $Q_{1+k}:\{a,\tau\}^*\to \{a\}^*$ be the observation projection. Then $Q_{1+k}(\varepsilon)=Q_{1+k}(\tau)$, $\varepsilon a\in P_{1+k}(K_1\cup K_2)=\{a,\tau\}=P_{1+k}(C) \ni\tau$ and $\tau a\in L_1\parallel\overline{P_k(K_1\cup K_2)}=L_1$, but $\tau a \notin P_{1+k}(K_1\cup K_2)$.
  \end{exmp}
  
  To cope with this issue, we now modify the definition to obtain a stronger version that is closed under language unions. The modification is that we do not require $P_{i+k}(K)$ to be $P_{i+k}(C)$-observable with respect to $L(G_i)\parallel \overline{P_k(K)}$, but with respect to a bigger language $L(G_i)\parallel L(G_k)$.
  \begin{defn}\label{def:StrongConditionalRobservability}
    Let $G_1$ and $G_2$ be generators over the event sets $\Sigma_1$ and $\Sigma_2$, respectively, and let $G_k$ be a coordinator over the event set $\Sigma_k$. Let $K\subseteq C \subseteq L_m(G_1\parallel G_2\parallel G_k)$. The language $K$ is {\em conditionally strong $C$-observable\/} with respect to generators $G_1, G_2, G_k$, and projections $Q_{1+k}, Q_{2+k}, Q_{k}$, where $Q_i: \Sigma_i^*\to \Sigma_{i,o}^*$, for $i=1+k,2+k,k$ if
    \begin{enumerate}
      \item $P_k(K)$ is $P_k(C)$-observable with respect to $L(G_k)$ and $Q_{k}$, and
      \item $P_{i+k}(K)$ is $P_{i+k}(C)$-observable with respect to $L(G_i) \parallel L(G_k)$ and $Q_{i+k}$, for $i=1,2$.
    \end{enumerate}
  \end{defn}
  
  Note that, by definition, if $K'\subseteq K$ is conditionally (strong) $C$-observable, it is also conditionally (strong) $K$-observable.

  We can now prove that the supremal conditionally strong relative observable sublanguage always exists.
  \begin{thm}\label{existenceCRO}
    For a given $C$, the supremal conditionally strong $C$-observable sublanguage always exists and equals to the union of all conditionally strong $C$-observable sublanguages.
  \end{thm}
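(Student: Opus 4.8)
The plan is to follow the standard template for proving that a property closed under unions admits a supremum: show that the union of an arbitrary family of conditionally strong $C$-observable sublanguages is again conditionally strong $C$-observable. Since the family may be infinite, I would first reduce to the case of a union of two languages (by a directed-union / chain argument, noting that relative observability is a ``local'' condition only involving pairs of words $s,s'$, so it lifts from finite unions to arbitrary unions). Thus the crux is: if $K_1, K_2 \subseteq C$ are both conditionally strong $C$-observable, then so is $K_1 \cup K_2$.

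For the reduction to two languages, I would verify the three defining conditions for $K = \bigcup_j K_j$ directly. Observe that $\overline{K} = \bigcup_j \overline{K_j}$ and $P_{i+k}(K) = \bigcup_j P_{i+k}(K_j)$, with the analogous identities for prefix closures, because projection and prefix closure both commute with arbitrary unions. Now fix a pair $s, s'$ with $Q_{i+k}(s) = Q_{i+k}(s')$, and suppose $s\sigma \in \overline{P_{i+k}(K)}$, $s' \in \overline{P_{i+k}(C)}$, and $s'\sigma \in L(G_i)\parallel L(G_k)$. Then $s\sigma \in \overline{P_{i+k}(K_j)}$ for some index $j$; since $K_j$ is conditionally strong $C$-observable and the reference language $L(G_i)\parallel L(G_k)$ does not depend on $j$, condition (2) for $K_j$ yields $s'\sigma \in \overline{P_{i+k}(K_j)} \subseteq \overline{P_{i+k}(K)}$. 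The argument for condition (1) with $P_k$ and $L(G_k)$ is identical. So the key point — and the whole reason the ``strong'' variant works where Definition~\ref{def:conditionalRobservability} fails, as Example~\ref{ex01} shows — is that in Definition~\ref{def:StrongConditionalRobservability} the plant reference languages $L(G_k)$ and $L(G_i)\parallel L(G_k)$ are fixed and independent of $K$, whereas in the non-strong version the reference language $L(G_i)\parallel \overline{P_k(K)}$ grows with $K$, so a witness to non-observability for the larger union need not be a witness for either piece.

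The existence of the supremum then follows: let $\mathcal{K}$ be the set of all conditionally strong $C$-observable sublanguages of $C$ (this set is nonempty since $\emptyset \in \mathcal{K}$, and in typical use one also has $K \in \mathcal{K}$ for the given specification $K$). By the union-closure just established, $\sup\,\mathcal{K} := \bigcup \mathcal{K}$ is itself in $\mathcal{K}$, hence is the supremal conditionally strong $C$-observable sublanguage, and by construction it equals the union of all such sublanguages.

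The main obstacle I anticipate is the passage from finite to arbitrary unions: one must be careful that the defining condition of relative observability, although stated with a universal quantifier over all pairs $(s,s')$, only ever ``uses'' a single language at a time on the consequent side, so no compactness is needed — any witness lives in some finite (indeed single) member of the family. I would make this explicit rather than gloss over it, since it is precisely the structural feature that distinguishes this proof from the failed union-closure for plain conditional relative observability. Everything else is routine bookkeeping with the identities $\overline{\bigcup_j L_j} = \bigcup_j \overline{L_j}$ and $P(\bigcup_j L_j) = \bigcup_j P(L_j)$.
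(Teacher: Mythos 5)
Your proposal is correct and follows essentially the same route as the paper: locate the witness $s\sigma$ in $\overline{P_{i+k}(K_j)}$ for a single index $j$, invoke that member's relative observability with respect to the fixed reference languages $L(G_k)$ and $L(G_i)\parallel L(G_k)$, and conclude membership in the union; the preliminary talk of a reduction to two languages is unnecessary (and you in fact abandon it for the direct arbitrary-union argument, just as the paper does). Your remark that the argument works precisely because the strong definition's plant references do not depend on $K$, unlike $L(G_i)\parallel\overline{P_k(K)}$ in the non-strong version, correctly identifies the point illustrated by Example~\ref{ex01}.
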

  \begin{proof}
    Let $I$ be an index set, and for $i\in I$, let $K_i\subseteq C$ be a conditionally strong $C$-observable sublanguage of $K\subseteq L_m(G_1\parallel G_2\parallel G_k)$ with respect to generators $G_1$, $G_2$, $G_k$ and projections $Q_{1+k}$, $Q_{2+k}$, $Q_{k}$. We prove that $\cup_{i\in I} K_i$ is conditionally strong $C$-observable.

    To prove that $P_k(\cup_{i\in I} K_i)$ is $P_k(C)$-observable with respect to $L(G_k)$ and $Q_{k}$, let $sa\in P_k(\overline{\cup_{i\in I} K_i}) = \cup_{i\in I} P_k(\overline{K_i})$, $s'\in \overline{P_k(C)}$, $s'a \in L(G_k)$, and $Q_k(s)=Q_k(s')$. Then $sa\in P_k(\overline{K_i})$, for some $i\in I$, and $P_k(C)$-observability of $P_k(K_i)$ with respect to $L(G_k)$ and $Q_k$ implies that $s'a \in P_k(\overline{K_i})\subseteq P_k(\cup_{i\in I} \overline{K_i}) = P_k(\overline{\cup_{i\in I} K_i})$.
     
    To prove that $P_{1+k}(\cup_{i\in I} K_i)$ is $P_{1+k}(C)$-observable, assume that $sa \in P_{1+k}(\overline{\cup_{i\in I} K_i}) = \cup_{i\in I} P_{1+k}(\overline{K_i})$, $s'\in P_{1+k}(\overline{C})$, $s'a\in L(G_1)\parallel L(G_k)$, and $Q_{1+k}(s)=Q_{1+k}(s')$. Then we have that $sa\in P_{1+k}(\overline{K_i})$, for some $i\in I$, and $P_{1+k}(C)$-observability of $P_{1+k}(K_i)$ with respect to $L(G_1)\parallel L(G_k)$ and $Q_{1+k}$ implies that $s'a \in P_{1+k}(\overline{K_i})$. 
    
    The case for $P_{2+k}(\overline{\cup_{i\in I} K_i})$ is $P_{2+k}(C)$-observable is analogous.
  \end{proof}
  
  We now recall definitions of normality and conditional normality, and compare the notion of conditional normality to conditional (strong) relative observability.
  
  Let $G$ be a generator over the event set $\Sigma$, and let $Q:\Sigma^* \to \Sigma_o^*$ be a projection. A language $K\subseteq L_m(G)$ is {\em normal\/} with respect to $L(G)$ and $Q$ if $\overline{K} = Q^{-1}Q(\overline{K})\cap L(G)$. It is known that normality implies observability~\cite{CL08}. 
  
  Let $G_1$ and $G_2$ be generators over the event sets $\Sigma_1$ and $\Sigma_2$, respectively, and let $G_k$ be a coordinator over $\Sigma_k$. A language $K\subseteq L_m(G_1\parallel G_2\parallel G_k)$ is {\em conditionally normal\/} with respect to generators $G_1, G_2, G_k$ and projections $Q_{1+k}, Q_{2+k}$, $Q_{k}$, where $Q_i: \Sigma_i^*\to \Sigma_{i,o}^*$, for $i=1+k,2+k,k$, if
  (i) $P_k(K)$ is normal with respect to $L(G_k)$ and $Q_{k}$, and
  (ii) $P_{i+k}(K)$ is normal with respect to $L(G_i) \parallel \overline{P_k(K)}$ and $Q_{i+k}$, for $i=1,2$, cf.~\cite{cdc2014}.

  The following theorem compares the notions of conditional observability, conditional normality, conditional relative observability, and conditional strong relative observability. The main point of this result is to show that we do not need to use conditional normality in coordination control anymore, because the weaker condition of conditional strong relative observability can be used instead.
  \begin{thm}\label{thmImplications}
    The following holds:
    \begin{enumerate}
      \item Conditional normality implies conditional strong relative observability.
      \item Conditional strong relative observability implies conditional relative observability.
      \item Conditional relative observability implies conditional observability.
    \end{enumerate}
  \end{thm}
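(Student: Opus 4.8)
The plan is to prove each implication separately, reducing the conditional notions to their non-conditional counterparts componentwise and then invoking known relationships among normality, relative observability, and observability for plain (non-conditional) languages.

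For item (1), suppose $K$ is conditionally normal with respect to $G_1,G_2,G_k$ and $Q_{1+k},Q_{2+k},Q_k$. Condition (i) of conditional normality says $P_k(K)$ is normal with respect to $L(G_k)$ and $Q_k$; since normality implies observability, and indeed normality with respect to a plant $M$ implies $C$-observability with respect to $M$ for any $C$ with $\overline{K}\subseteq\overline{C}$ (because the extra hypothesis $s'\in\overline{C}$ only weakens the premise relative to the normality condition $s'\in L(M)$ combined with $Q(s)=Q(s')$ and $s\sigma\in\overline K$), we get condition (i) of conditional strong $C$-observability. For condition (ii), I would observe that $\overline{P_k(K)}\subseteq L(G_k)$, hence $L(G_i)\parallel\overline{P_k(K)}\subseteq L(G_i)\parallel L(G_k)$; the subtlety is that normality with respect to the smaller plant $L(G_i)\parallel\overline{P_k(K)}$ does not automatically transfer to the larger plant $L(G_i)\parallel L(G_k)$. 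So here I would directly use the definition of normality on the larger plant: I need to show $P_{i+k}(\overline K)=Q_{i+k}^{-1}Q_{i+k}(P_{i+k}(\overline K))\cap(L(G_i)\parallel L(G_k))$. The inclusion $\subseteq$ is trivial. For $\supseteq$, take $w$ in the right-hand side; then $w\in L(G_i)\parallel L(G_k)$ and $w=Q_{i+k}^{-1}Q_{i+k}(v)$ for some $v\in P_{i+k}(\overline K)$. Since $P_{i+k}(\overline K)=\overline{P_{i+k}(K)}$ and $\overline K\subseteq L(G_1\parallel G_2\parallel G_k)$, we get $v\in\overline{P_k(K)}$ lifted appropriately, so in fact $w\in L(G_i)\parallel\overline{P_k(K)}$ — this is the key point, that the $\Sigma_k$-part of any word that is $Q_{i+k}$-equivalent to a word in $P_{i+k}(\overline K)$ still lies in $\overline{P_k(K)}$, which holds because $\Sigma_k$-events need not be observable, but $P_k(\overline{P_{i+k}(\overline K)})=\overline{P_k(K)}$ and the normality of $P_k(K)$ with respect to $L(G_k)$ can be used to pull $w$'s $\Sigma_k$-projection back into $\overline{P_k(K)}$. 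Once $w\in L(G_i)\parallel\overline{P_k(K)}$, conditional normality condition (ii) gives $w\in P_{i+k}(\overline K)$. Then strong $C$-observability of $P_{i+k}(K)$ with respect to $L(G_i)\parallel L(G_k)$ follows from normality-implies-observability applied on this plant, again weakened to $C$-observability.

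For item (2), this is essentially definitional: conditional strong $C$-observability requires $P_{i+k}(K)$ to be $P_{i+k}(C)$-observable with respect to the \emph{larger} plant $L(G_i)\parallel L(G_k)$, whereas conditional $C$-observability only requires it with respect to the \emph{smaller} plant $L(G_i)\parallel\overline{P_k(K)}$. Since $L(G_i)\parallel\overline{P_k(K)}\subseteq L(G_i)\parallel L(G_k)$, and relative observability is monotone under shrinking the plant (fewer words $s'\sigma\in L(G)$ to worry about), $C$-observability with respect to the big plant implies $C$-observability with respect to the small plant. Condition (i) is identical in both definitions. For item (3), I would simply invoke the remark from \cite{caiCDC13}, recalled just before Definition~\ref{def:conditionalRobservability}, that $C$-observability implies observability whenever $K\subseteq C$; applying this to each of the three components $P_k$, $P_{1+k}$, $P_{2+k}$ with the corresponding $C$-languages $P_k(C)$, $P_{i+k}(C)$ turns conditional $C$-observability into conditional observability verbatim.

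The main obstacle will be item (1), specifically the plant-enlargement step in condition (ii): one must carefully justify why normality of $P_{i+k}(K)$ with respect to $L(G_i)\parallel\overline{P_k(K)}$ together with normality of $P_k(K)$ with respect to $L(G_k)$ yields normality of $P_{i+k}(K)$ with respect to the enlarged plant $L(G_i)\parallel L(G_k)$. I expect this to hinge on a projection-commutation argument: for any $w\in(\Sigma_i\cup\Sigma_k)^*$, $P_k(w)$ and $w$ have the same image under the observation projection on the $\Sigma_k$-observable part, so normality of $P_k(K)$ lets us conclude $P_k(w)\in\overline{P_k(K)}$, which is exactly what is needed to place $w$ in the smaller plant. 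The other two items are routine monotonicity and a direct citation, so I would present them briefly.
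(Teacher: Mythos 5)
Your proposal is correct and follows essentially the same route as the paper: items (2) and (3) are handled identically (plant monotonicity of relative observability, and the citation that relative observability implies observability), and for item (1) your key observation --- that normality of $P_k(K)$ with respect to $L(G_k)$ is exactly what lets you pull the $\Sigma_k$-projection of a candidate word back into $\overline{P_k(K)}$ and thereby transfer normality of $P_{i+k}(K)$ from the plant $L(G_i)\parallel\overline{P_k(K)}$ to the larger plant $L(G_i)\parallel L(G_k)$ --- is precisely the content of the paper's argument. The only difference is presentational: the paper factors this plant-enlargement step through two auxiliary lemmas (normality is preserved under synchronous composition of nonconflicting normal components, and normality is transitive), whereas you inline the same projection-commutation computation directly.
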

  \begin{proof}
    The implication (2) is obvious by definition, because $\overline{P_k(K)}\subseteq L(G_k)$, while (3) follows from~\cite{caiCDC13} where it was shown that relative observability implies observability. We now prove (1). Let $K\subseteq C \subseteq L_m(G_1\parallel G_2\parallel G_k)$ be such that $K$ is conditionally normal with respect to generators $G_1,G_2,G_k$ and projections $Q_{1+k}, Q_{2+k}, Q_k$. Then, the assumption that $P_k(K)$ is normal with respect to $L(G_k)$ implies that $P_k(K)$ is $P_k(C)$-observable with respect to $L(G_k)$ by~\cite{caiCDC13}. Moreover, for $i=1,2$, we have that $P_{i+k}(K)$ is normal with respect to $L(G_i)\parallel\overline{P_k(K)}$. By Lemma~\ref{normalitaComposition}, $L(G_i)\parallel\overline{P_k(K)}$ is normal with respect to $L(G_i)\parallel L(G_k)$. Hence, by the transitivity of normality (Lemma~\ref{lem_trans}), $P_{i+k}(K)$ is normal with respect to $L(G_i)\parallel L(G_k)$. Then, by~\cite{caiCDC13}, we obtain that $P_{i+k}(K)$ is $P_{i+k}(C)$-observable with respect to $L(G_i)\parallel L(G_k)$, which was to be shown.
  \end{proof}

  Note that the language $K_1$ from Example~\ref{ex01} is conditionally relative observable, but not conditionally strong relative observable (and therefore not conditionally normal). On the other hand, $K_2$ is conditionally normal, hence also conditionally (strong) relative observable. Note also that conditional strong relative observability does not imply conditional normality, see, e.g., condition (i) of the definitions.

  We have shown that the supremal conditionally controllable and conditionally strong relative observable sublanguage exists. We now present conditions under which a conditionally controllable and conditionally observable sublanguage containing the supremal conditionally controllable and conditionally strong relative observable sublanguage can be computed in a distributed way.
  
  Consider the setting of Problem~\ref{problem:controlsynthesis} and define the languages 
  \begin{equation}\label{eqCRO}
    \begin{aligned}
      \supCRO_k     & = \supCRO(P_k(K), L(G_k))\\[3pt]
      \supCRO_{i+k} & = \supCRO(P_{i+k}(K), L(G_i) \parallel \overline{\supCRO_k})
    \end{aligned}
  \end{equation}
  for $i=1,2$, where $\supCRO(K,L)$ denotes the supremal controllable (with respect to the corresponding event set of uncontrollable events) and $(K\cap L)$-observable (with respect to corresponding projection to observable events) sublanguage of the language $K$. The way how to compute the supremal relatively observable sublanguage is described in~\cite{caiCDC13}. For $K\subseteq L$, let
  \[
    \supcCSRO(K,L,(\Sigma_{1,u},\Sigma_{2,u},\Sigma_{k,u}),(Q_{1+k},Q_{2+k},Q_{k}))
  \]
  denote the supremal conditionally controllable and conditionally strong $K$-observable sublanguage of the specification language $K$ with respect to the plant language $L=L(G_1\parallel G_2\parallel G_k)$, the sets of uncontrollable events $\Sigma_{1,u}$, $\Sigma_{2,u}$, $\Sigma_{k,u}$, and projections $Q_{1+k}$, $Q_{2+k}$, $Q_{k}$, where $Q_i:\Sigma_i^* \to \Sigma^*_{i,o}$, for $i=1+k,2+k,k$. For simplicity, denote $\supcCSRO = \supcCSRO(K, L, (\Sigma_{1,u}, \Sigma_{2,u}, \Sigma_{k,u}), (Q_{1+k}, Q_{2+k}, Q_{k}))$. It can be shown that 
  \begin{equation}\label{eqInc}
    \supcCSRO \subseteq \supCRO_{1+k} \parallel \supCRO_{2+k}\,.
  \end{equation}
  By Lemma~\ref{lem11} we need to show that $P_{i+k}(\supcCSRO)\subseteq \supCRO_{i+k}$, for $i=1,2$. By definition of conditional controllability, $P_{i+k}(\supcCSRO) \subseteq P_{i+k}(K)$ is controllable with respect to $L(G_i) \parallel \overline{P_k(\supcCSRO)}$. Since $P_k(\supcCSRO)\subseteq P_k(K)$ is controllable and $P_k(K)$-observable with respect to $L(G_k)$, $P_k(\supcCSRO)\subseteq \overline{\supCRO_k}$. Thus, $P_k(\supcCSRO)$ is controllable with respect to $\overline{\supCRO_k}\subseteq L(G_k)$. Then, by Lemma~\ref{feng}, $L(G_i)\parallel \overline{P_k(\supcCSRO)}$ is controllable with respect to $L(G_i)\parallel \overline{\supCRO_k}$, and the transitivity of controllability (Lemma~\ref{lem_transC}) implies that $P_{i+k}(\supcCSRO)$ is controllable with respect to $L(G_i) \parallel \overline{\supCRO_k}$. 
  Next, by definition of conditional strong relative observability, $P_{i+k}(\supcCSRO)$ is $P_{i+k}(K)$-observable with respect to $L(G_i)\parallel L(G_k)$, hence it is also $C$-observable with respect to $L(G_i)\parallel L(G_k)$, for every $P_{i+k}(\supcCSRO) \subseteq C \subseteq P_{i+k}(K)$. As $P_{i+k}(\supcCSRO) \subseteq L(G_i)\parallel \overline{\supCRO_k}$, we also obtain that $P_{i+k}(\supcCSRO)$ is $C'$-observable with respect to $L(G_i)\parallel \overline{\supCRO_k}$, for every $P_{i+k}(\supcCSRO) \subseteq C' \subseteq P_{i+k}(K) \cap (L(G_i)\parallel \overline{\supCRO_k})$, which means that $P_{i+k}(\supcCSRO)\subseteq \supCRO_{i+k}$.
  
  This says that if $\supCRO_{1+k} \parallel \supCRO_{2+k}$ is conditionally controllable and conditionally observable, we have computed a language that is at least as good a solution as the supremal conditionally controllable and conditionally strong $K$-observable sublanguage, which is now the weakest known condition for which the supremal sublanguage exists.

  We now formulate the main result. 
 \begin{thm}\label{thm25}
    Consider the setting of Problem~\ref{problem:controlsynthesis} and the languages defined in~(\ref{eqCRO}). Assume that $\supCRO_{1+k}$ and $\supCRO_{2+k}$ are nonconflicting, and let us denote $M = \supCRO_{1+k} \parallel \supCRO_{2+k}$ and $L=L(G_1\parallel G_2\parallel G_k)$. If $P_k(M)$ is controllable and $P_k(C)$-observable with respect to $L(G_k)$, $\Sigma_{k,u}$, and $Q_k$, for some $M\subseteq C \subseteq L$, then $M$ is conditionally controllable with respect to $G_1$, $G_2$, $G_k$ and $\Sigma_{1,u}$, $\Sigma_{2,u}$, $\Sigma_{k,u}$, and conditionally observable with respect to $G_1$, $G_2$, $G_k$ and $Q_{1+k}$, $Q_{2+k}$, $Q_{k}$. Moreover, it contains the language $\supcCSRO$.
  \end{thm}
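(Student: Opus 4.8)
The statement has three parts: $M$ is conditionally controllable, $M$ is conditionally observable, and $\supcCSRO\subseteq M$. The third is already settled --- it is exactly the inclusion~(\ref{eqInc}) established just before the theorem, since $M=\supCRO_{1+k}\parallel\supCRO_{2+k}$. For the first two, observe that the coordinator-level conditions are handed to us by the hypothesis: condition~(i) of conditional controllability asks precisely that $P_k(M)$ be controllable with respect to $L(G_k)$ and $\Sigma_{k,u}$, which is assumed, and condition~(i) of conditional observability asks that $P_k(M)$ be observable with respect to $L(G_k)$, $\Sigma_{k,c}$, and $Q_k$, which follows from the assumed $P_k(C)$-observability of $P_k(M)$ because relative observability implies observability~\cite{caiCDC13}. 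It remains to verify condition~(ii) of both notions for $i=1,2$, and by symmetry I treat $i=1$ only.

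The plan is to push controllability and observability down from the component $\supCRO_{1+k}$ to the projection $P_{1+k}(M)$. By construction, $\supCRO_{1+k}$ is controllable with respect to $L(G_1)\parallel\overline{\supCRO_k}$ and $\Sigma_{1+k,u}$, and, being relatively observable, also observable with respect to $L(G_1)\parallel\overline{\supCRO_k}$, $\Sigma_{1+k,c}$, and $Q_{1+k}$~\cite{caiCDC13}. Two structural facts bridge the gap. First, nonconflictingness of $\supCRO_{1+k}$ and $\supCRO_{2+k}$ gives $\overline{M}=\overline{\supCRO_{1+k}}\parallel\overline{\supCRO_{2+k}}$, and commuting the projection $P_{1+k}$ past the synchronous product (legitimate because $\Sigma_1\cap\Sigma_2\subseteq\Sigma_k$) yields $\overline{P_{1+k}(M)}=\{w\in\overline{\supCRO_{1+k}}\mid P_k(w)\in\overline{P_k(\supCRO_{2+k})}\}$; in particular $\overline{P_{1+k}(M)}\subseteq\overline{\supCRO_{1+k}}$. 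Second, $P_k(M)=P_k(\supCRO_{1+k})\cap P_k(\supCRO_{2+k})$ and, from the construction of the $\supCRO_{i+k}$ as in~\cite{JDEDS} (the relative-observability analog of $P_k(\supC_{i+k})\subseteq\supC_k$), $P_k(\supCRO_{i+k})\subseteq\overline{\supCRO_k}$; hence $\overline{P_k(M)}\subseteq\overline{\supCRO_k}$ and $L(G_1)\parallel\overline{P_k(M)}\subseteq L(G_1)\parallel\overline{\supCRO_k}$.

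Granting these facts, condition~(ii) unwinds routinely. For controllability: given $w\in\overline{P_{1+k}(M)}$, hence $w\in\overline{\supCRO_{1+k}}$, and $\sigma\in\Sigma_{1+k,u}$ with $w\sigma\in L(G_1)\parallel\overline{P_k(M)}$, the second fact places $w\sigma$ in $L(G_1)\parallel\overline{\supCRO_k}$, so controllability of $\supCRO_{1+k}$ gives $w\sigma\in\overline{\supCRO_{1+k}}$; and since $P_k(w\sigma)\in\overline{P_k(M)}\subseteq\overline{P_k(\supCRO_{2+k})}$ (using $\overline{A\cap B}\subseteq\overline{B}$), the first fact gives $w\sigma\in\overline{P_{1+k}(M)}$. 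For observability: given $s,s'$ with $Q_{1+k}(s)=Q_{1+k}(s')$, $s\sigma,s'\in\overline{P_{1+k}(M)}$, and $s'\sigma\in L(G_1)\parallel\overline{P_k(M)}$, the first fact gives $s\sigma,s'\in\overline{\supCRO_{1+k}}$ and the second gives $s'\sigma\in L(G_1)\parallel\overline{\supCRO_k}$, so observability of $\supCRO_{1+k}$ yields $s'\sigma\in\overline{\supCRO_{1+k}}$; and $P_k(s'\sigma)\in\overline{P_k(M)}\subseteq\overline{P_k(\supCRO_{2+k})}$, so the first fact gives $s'\sigma\in\overline{P_{1+k}(M)}$. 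This establishes condition~(ii) for $i=1$ and, symmetrically, for $i=2$, so $M$ is conditionally controllable and conditionally observable.

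I expect the real work to lie in the two structural facts, not in the final unwinding, and the harder of the two is the identity for $\overline{P_{1+k}(M)}$: it is the bookkeeping relating $P_{1+k}$, $\parallel$, and prefix closure --- the kind handled by the projection and synchronous-product lemmas used elsewhere (Lemmas~\ref{lem11} and~\ref{feng}) --- and it uses nonconflictingness essentially, since without it one has only $\overline{M}\subseteq\overline{\supCRO_{1+k}}\parallel\overline{\supCRO_{2+k}}$ and the characterization of $\overline{P_{1+k}(M)}$ breaks. The inclusion $P_k(\supCRO_{i+k})\subseteq\overline{\supCRO_k}$ should go through as in~\cite{JDEDS}, using that $\supCRO_{i+k}\subseteq P_{i+k}(K)$ is controllable and lies inside its plant $L(G_i)\parallel\overline{\supCRO_k}$. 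Note that relative observability enters the argument only lightly, through ``relatively observable implies observable''~\cite{caiCDC13}, once at the coordinator level and once for the component $\supCRO_{1+k}$; the sole purpose of the hypothesis that $P_k(M)$ be $P_k(C)$-observable is to secure condition~(i) of conditional observability, just as the hypothesis that $P_k(M)$ be controllable secures condition~(i) of conditional controllability.
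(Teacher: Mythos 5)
Your proof is correct and follows essentially the same route as the paper's: the same decomposition $\overline{P_{1+k}(M)}=\overline{\supCRO_{1+k}}\parallel\overline{P_k(\supCRO_{2+k})}$ via nonconflictingness (your Fact~1), the same inclusion $\overline{P_k(M)}\subseteq\overline{\supCRO_k}$ (the paper's Lemma~\ref{lemmaZ}), and the same lifting of controllability and relative observability from $\supCRO_{1+k}$ to $P_{1+k}(M)$, with the coordinator-level conditions supplied by the hypothesis. The one divergence is in the observability step, where you read $P_k(s'\sigma)\in\overline{P_k(M)}$ directly off the plant membership $s'\sigma\in L(G_1)\parallel\overline{P_k(M)}$ and thereby skip the paper's case analysis on $\sigma\in\Sigma_k$ versus $\sigma\in\Sigma_1\setminus\Sigma_k$ (which invokes the assumed observability of $P_k(M)$ a second time); this is a legitimate shortcut for the plant prescribed by the definition of conditional observability.
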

  \begin{proof}
     Indeed, $M \subseteq P_{1+k}(K)\parallel P_{2+k}(K) = K$ by conditional decomposability, and $P_k(M)$ is controllable and $P_k(M)$-observable with respect to $L(G_k)$, $\Sigma_{k,u}$, $Q_k$ by assumptions (since $P_k(C)$-observability implies $P_k(C')$-observability for every $M\subseteq C'\subseteq C$). Next, $P_{1+k}(M) = \supCRO_{1+k}\parallel P_k(M)$ is controllable with respect to $[L(G_1)\parallel \overline{\supCRO_k}]\parallel \overline{P_k(M)} = L(G_1)\parallel \overline{P_k(M)}$ by Lemma~\ref{feng} (because the nonconflictingness of $\supCRO_{1+k}$ and $\supCRO_{2+k}$ implies the nonconflictingness of $\supCRO_{1+k}$ and $P_k(M)$) and Lemma~\ref{lemmaZ}. 
    To show that $P_{1+k}(M)\subseteq P_{1+k}(K)\cap (L(G_1) \parallel \overline{\supCRO_k})$ is $P_{1+k}(M)$-observable, let $a\in \Sigma_{1+k}$, $sa,s'\in \overline{P_{1+k}(M)}$, $s'a\in L(G_1)\parallel \overline{P_k(M)}\subseteq L(G_1) \parallel \overline{\supCRO_k}$, and $Q_{1+k}(s)=Q_{1+k}(s')$. By the $(P_{1+k}(K)\cap (L(G_1) \parallel \overline{\supCRO_k}))$-observability of $\supCRO_{1+k}$, $s'a\in \overline{\supCRO_{1+k}}$. 
    We have two cases: 
    (i) If $a\in\Sigma_1\setminus\Sigma_k$, then $P_k(s'a)=P_k(s')\in \overline{P_k(M)}\subseteq \overline{P_k(\supCRO_{2+k})}$. 
    (ii) If $a\in\Sigma_k$, then $P_k(s)a\in \overline{P_k(M)}$, $P_k(s')\in \overline{P_k(M)}$, and $P_k(s')a\in L(G_k)$ imply (by $P_k(M)$-observability of $P_k(M)$) that $P_k(s'a)\in \overline{P_k(M)}\subseteq \overline{P_k(\supCRO_{2+k})}$. 
    Therefore, in both cases, $s'a \in \overline{\supCRO_{1+k}} \parallel \overline{P_k(\supCRO_{2+k})} = \overline{P_{1+k}(M)}$ by the nonconflictingness. The case of $P_{2+k}(M)$ is analogous, hence $M$ is conditionally controllable with respect to $G_1$, $G_2$, $G_k$ and $\Sigma_{1,u}$, $\Sigma_{2,u}$, $\Sigma_{k,u}$, and conditionally $M$-observable (hence observable) with respect to $G_1$, $G_2$, $G_k$ and $Q_{1+k}$, $Q_{2+k}$, $Q_{k}$.
    Finally, $\supcCSRO\subseteq \supCRO_{1+k}\parallel\supCRO_{2+k}$ as shown in (\ref{eqInc}) above.
  \end{proof}

\section{Auxiliary Results}
  This section provides auxiliary results needed in the paper.
  \begin{lem}\label{lem_trans}
    Let $K\subseteq L\subseteq M$ be languages such that $K$ is normal with respect to $L$ and $Q$, and $L$ is normal with respect to $M$ and $Q$. Then $K$ is normal with respect to $M$ and $Q$.
  \end{lem}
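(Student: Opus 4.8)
The plan is to verify the defining identity of normality, namely $\overline{K} = Q^{-1}Q(\overline{K}) \cap \overline{M}$, directly by proving the two inclusions separately. One of them is trivial: every word lies in $Q^{-1}Q$ of its own projection, and $\overline{K}\subseteq\overline{M}$ since $K\subseteq M$, so $\overline{K}\subseteq Q^{-1}Q(\overline{K})\cap\overline{M}$. All the content is therefore in the reverse inclusion.

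For the reverse inclusion I would take an arbitrary $w\in Q^{-1}Q(\overline{K})\cap\overline{M}$ and deduce $w\in\overline{K}$ by peeling off one layer of normality at a time. First pick $v\in\overline{K}$ with $Q(w)=Q(v)$, which exists because $w\in Q^{-1}Q(\overline{K})$. Since $K\subseteq L$ we have $v\in\overline{L}$, hence $Q(w)=Q(v)\in Q(\overline{L})$, i.e. $w\in Q^{-1}Q(\overline{L})$. Combining this with $w\in\overline{M}$ and applying normality of $L$ with respect to $M$ and $Q$ gives $w\in\overline{L}$. Now $w\in Q^{-1}Q(\overline{K})\cap\overline{L}$, so normality of $K$ with respect to $L$ and $Q$ yields $w\in\overline{K}$, as required.

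There is no serious obstacle here; the argument is a short set-theoretic chase and the main thing to keep straight is the bookkeeping with prefix closures (the chain $K\subseteq L\subseteq M$ giving $\overline{K}\subseteq\overline{L}\subseteq\overline{M}$, and the fact that $Q(\overline{K})\subseteq Q(\overline{L})$). One should also resist the temptation to spend effort on the first inclusion, which is immediate. The same two-step peeling argument also establishes the companion transitivity statement for controllability used elsewhere in the paper.
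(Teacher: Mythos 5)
Your argument is correct and is essentially the paper's own proof: both first use normality of $L$ with respect to $M$ to conclude $Q^{-1}Q(\overline{K})\cap\overline{M}\subseteq\overline{L}$ (via $Q^{-1}Q(\overline{K})\subseteq Q^{-1}Q(\overline{L})$), and then apply normality of $K$ with respect to $L$; you merely phrase the set-algebraic chain elementwise. No gap.
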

  \begin{proof}
    By the assumption $Q^{-1}Q(\overline{K})\cap \overline{L} = \overline{K}$ and $Q^{-1}Q(\overline{L})\cap \overline{M} = \overline{L}$, hence $Q^{-1}Q(\overline{K})\cap \overline{M} \subseteq Q^{-1}Q(\overline{L})\cap \overline{M} = \overline{L}$. This implies that $Q^{-1}Q(\overline{K})\cap \overline{M} = Q^{-1}Q(\overline{K})\cap \overline{M} \cap \overline{L} = \overline{K} \cap \overline{M} = \overline{K}$.
  \end{proof}

  \begin{lem}\label{normalitaComposition}
    Let $K_1\subseteq L_1$ over $\Sigma_1$ and $K_2\subseteq L_2$ over $\Sigma_2$ be nonconflicting languages such that $K_1$ is normal with respect to $L_1$ and $Q_1:\Sigma_1^*\to \Sigma_{1,o}^*$ and $K_2$ is normal with respect to $L_2$ and $Q_2:\Sigma_2^*\to \Sigma_{2,o}^*$, where $L_1$ and $L_2$ are prefix-closed. Then $K_1\parallel K_2$ is normal with respect to $L_1\parallel L_2$ and $Q:(\Sigma_1\cup\Sigma_2)^*\to (\Sigma_{1,o}\cup\Sigma_{2,o})^*$.
  \end{lem}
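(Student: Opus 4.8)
The plan is to verify directly the defining equation of normality for $K_1\parallel K_2$, namely that $\overline{K_1\parallel K_2} = Q^{-1}Q(\overline{K_1\parallel K_2})\cap(L_1\parallel L_2)$. Write $\Sigma=\Sigma_1\cup\Sigma_2$ and $\Sigma_o=\Sigma_{1,o}\cup\Sigma_{2,o}$, let $P_i\colon\Sigma^*\to\Sigma_i^*$ and $R_i\colon\Sigma_o^*\to\Sigma_{i,o}^*$ be the natural projections, and recall $Q\colon\Sigma^*\to\Sigma_o^*$. The inclusion ``$\subseteq$'' is trivial: every word lies in $Q^{-1}Q$ of itself, and $\overline{K_1\parallel K_2}\subseteq\overline{L_1}\parallel\overline{L_2}=L_1\parallel L_2$ since $L_1,L_2$ are prefix-closed. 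So the work is the inclusion ``$\supseteq$''.

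I would first record two preliminary facts. (a) Since $K_1$ and $K_2$ are nonconflicting, $\overline{K_1\parallel K_2}=\overline{K_1}\parallel\overline{K_2}=P_1^{-1}(\overline{K_1})\cap P_2^{-1}(\overline{K_2})$. (b) The projections obey the commutation identity $Q_i\circ P_i=R_i\circ Q$ on $\Sigma^*$, a routine letter-by-letter check using only $\Sigma_{i,o}\subseteq\Sigma_i$ and $\Sigma_o=\Sigma_{1,o}\cup\Sigma_{2,o}$; in particular $Q(v)=Q(w)$ forces $Q_i(P_i(v))=Q_i(P_i(w))$ for $i=1,2$.

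Now take any $w\in Q^{-1}Q(\overline{K_1\parallel K_2})\cap(L_1\parallel L_2)$. By (a) it suffices to show $P_i(w)\in\overline{K_i}$ for $i=1,2$, and by symmetry I only treat $i=1$. From $w\in L_1\parallel L_2$ we get $P_1(w)\in L_1$. From $w\in Q^{-1}Q(\overline{K_1\parallel K_2})$ there is $v\in\overline{K_1\parallel K_2}$ with $Q(v)=Q(w)$; then $P_1(v)\in\overline{K_1}$ by (a), and $Q_1(P_1(w))=Q_1(P_1(v))\in Q_1(\overline{K_1})$ by (b). Hence $P_1(w)\in Q_1^{-1}Q_1(\overline{K_1})\cap L_1=\overline{K_1}$, the last equality being precisely the normality of $K_1$ with respect to $L_1$ and $Q_1$. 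The symmetric argument gives $P_2(w)\in\overline{K_2}$, so $w\in P_1^{-1}(\overline{K_1})\cap P_2^{-1}(\overline{K_2})=\overline{K_1\parallel K_2}$, which finishes the proof.

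The only point requiring genuine care is the bookkeeping with the four families of projections, and in particular getting the commutation identity (b) right; once that is in place the argument is a direct, non-inductive application of the definition of normality together with the nonconflictingness hypothesis (which is exactly what lets us pass between $\overline{K_1\parallel K_2}$ and $\overline{K_1}\parallel\overline{K_2}$). No fixpoint or approximation argument should be needed.
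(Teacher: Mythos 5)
Your proof is correct and follows essentially the same route as the paper's: the paper's one-line chain $Q^{-1}Q(\overline{K_1\parallel K_2})\cap (L_1\parallel L_2)\subseteq Q_1^{-1}Q_1(\overline{K_1})\parallel Q_2^{-1}Q_2(\overline{K_2})\parallel L_1\parallel L_2=\overline{K_1}\parallel\overline{K_2}=\overline{K_1\parallel K_2}$ is exactly your element-wise argument written at the set level, with your commutation identity $Q_i\circ P_i=R_i\circ Q$ justifying the first inclusion and with normality and nonconflictingness invoked in the same two places. The only difference is that you spell out the projection bookkeeping that the paper leaves implicit.
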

  \begin{proof}
    By definition we have that $Q^{-1}Q(\overline{K_1\parallel K_2}) \cap L_1\parallel L_2 \subseteq Q_1^{-1}Q_1(\overline{K_1}) \parallel Q_2^{-1}Q_2(\overline{K_2}) \parallel L_1\parallel L_2 = \overline{K_1} \parallel \overline{K_2} = \overline{K_1\parallel K_2}$, where the first equality is by normality of $K_1$ and $K_2$, and the last equality is by nonconflictingness. As the other inclusion always holds, the proof is complete.
  \end{proof}
  
  \begin{lem}[Proposition~4.6 in \cite{FLT}]\label{feng}
    Let $L_i\subseteq \Sigma_i^*$, for $i=1,2$, be prefix-closed languages, and let $K_i\subseteq L_i$ be controllable with respect to $L_i$ and $\Sigma_{i,u}$. Let $\Sigma=\Sigma_1\cup \Sigma_2$. If $K_1$ and $K_2$ are nonconflicting, then $K_1\parallel K_2$ is controllable with respect to $L_1\parallel L_2$ and $\Sigma_u$. 
    \qed
  \end{lem}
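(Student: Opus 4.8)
Recall that the assertion to prove is the controllability inclusion $\overline{K_1\parallel K_2}\,\Sigma_u\cap(L_1\parallel L_2)\subseteq\overline{K_1\parallel K_2}$, and the plan is to verify it directly, reducing everything to the assumed controllability of the two components via the projections $P_i:\Sigma^*\to\Sigma_i^*$, $i=1,2$. First I would dispose of the prefix-closure on the left: since $K_1$ and $K_2$ are nonconflicting, $\overline{K_1\parallel K_2}=\overline{K_1}\parallel\overline{K_2}=P_1^{-1}(\overline{K_1})\cap P_2^{-1}(\overline{K_2})$, so membership of a string $w$ in $\overline{K_1\parallel K_2}$ amounts to $P_1(w)\in\overline{K_1}$ together with $P_2(w)\in\overline{K_2}$. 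This is the only place the nonconflictingness hypothesis is used, and it is what decouples the two coordinates.

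Next I would fix a string $s\sigma$ witnessing the left-hand side, that is, $s\in\overline{K_1\parallel K_2}=\overline{K_1}\parallel\overline{K_2}$, $\sigma\in\Sigma_u$, and $s\sigma\in L_1\parallel L_2$, and establish $P_i(s\sigma)\in\overline{K_i}$ for $i=1,2$, splitting according to whether $\sigma\in\Sigma_i$. If $\sigma\notin\Sigma_i$, then $P_i(s\sigma)=P_i(s)\in\overline{K_i}$ and there is nothing to prove. If $\sigma\in\Sigma_i$, then $\sigma$, being uncontrollable, must lie in $\Sigma_i\cap\Sigma_u=\Sigma_{i,u}$; moreover $P_i(s\sigma)=P_i(s)\sigma$, and from $s\sigma\in L_1\parallel L_2=P_1^{-1}(L_1)\cap P_2^{-1}(L_2)$ we read off $P_i(s)\sigma\in L_i$. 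Since $P_i(s)\in\overline{K_i}$, controllability of $K_i$ with respect to $L_i$ and $\Sigma_{i,u}$ then yields $P_i(s)\sigma\in\overline{K_i}$. Because $\sigma\in\Sigma_u\subseteq\Sigma=\Sigma_1\cup\Sigma_2$, exactly one of the two cases applies for each $i$, so $P_i(s\sigma)\in\overline{K_i}$ for both coordinates, whence $s\sigma\in P_1^{-1}(\overline{K_1})\cap P_2^{-1}(\overline{K_2})=\overline{K_1\parallel K_2}$, as required.

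I do not expect a genuine obstacle here: the argument is a two-case bookkeeping whose only load-bearing ingredients are (a) nonconflictingness, used once to commute prefix-closure past $\parallel$, and (b) the elementary fact that a globally uncontrollable event lying in $\Sigma_i$ automatically lies in $\Sigma_{i,u}$, which is precisely what lets the global controllability test factor into the two local ones. The case $\sigma\notin\Sigma_1\cup\Sigma_2$ never arises because $\Sigma_u\subseteq\Sigma$, so the case analysis is exhaustive; prefix-closedness of the $L_i$ plays only the background role of ensuring that $L_1\parallel L_2$ is prefix-closed as well.
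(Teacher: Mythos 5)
Your proof is correct. Note, however, that the paper itself gives no proof of this lemma: it is stated with a reference to Proposition~4.6 of \cite{FLT} and closed with \qed, so there is no internal argument to compare against. Your direct verification --- rewriting $\overline{K_1\parallel K_2}$ as $P_1^{-1}(\overline{K_1})\cap P_2^{-1}(\overline{K_2})$ via nonconflictingness and then checking the controllability inclusion coordinatewise, using that a globally uncontrollable event in $\Sigma_i$ lies in $\Sigma_{i,u}$ --- is the standard argument for this result and is sound; as you observe, prefix-closedness of the $L_i$ is not actually load-bearing in this formulation.
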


  \begin{lem}[\cite{automatica2011}]\label{lem_transC}
    Let $K\subseteq L \subseteq M$ be languages over $\Sigma$ such that $K$ is controllable with respect to $\overline{L}$ and $\Sigma_u$, and $L$ is controllable with respect to $\overline{M}$ and $\Sigma_u$. Then $K$ is controllable with respect to $\overline{M}$ and $\Sigma_u$. 
    \qed
  \end{lem}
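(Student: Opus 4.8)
The plan is to unfold the inclusion-form definition of controllability and chain the two hypotheses through a single generic uncontrollable one-step extension. Recall from the preliminaries that $K$ being controllable with respect to a prefix-closed plant language $N$ and $\Sigma_u$ means $\overline{K}\Sigma_u \cap N \subseteq \overline{K}$. Applied to the present nesting, the two hypotheses read $\overline{K}\Sigma_u \cap \overline{L} \subseteq \overline{K}$ and $\overline{L}\Sigma_u \cap \overline{M} \subseteq \overline{L}$, and the goal to be established is $\overline{K}\Sigma_u \cap \overline{M} \subseteq \overline{K}$.

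First I would fix an arbitrary word $s\sigma \in \overline{K}\Sigma_u \cap \overline{M}$, so that $s \in \overline{K}$, $\sigma \in \Sigma_u$, and $s\sigma \in \overline{M}$; the aim is to conclude $s\sigma \in \overline{K}$. The key structural fact I would exploit is that the nesting $K \subseteq L \subseteq M$ yields $\overline{K} \subseteq \overline{L} \subseteq \overline{M}$ by monotonicity of the prefix closure, which is what allows the outer hypothesis to be triggered at all.

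The central two-step chaining then proceeds as follows. Since $s \in \overline{K} \subseteq \overline{L}$, the extension $s\sigma$ lies in $\overline{L}\Sigma_u$, and together with $s\sigma \in \overline{M}$ this places $s\sigma$ in $\overline{L}\Sigma_u \cap \overline{M}$; controllability of $L$ with respect to $\overline{M}$ then gives $s\sigma \in \overline{L}$. Returning now to $K$, we still have $s \in \overline{K}$ and $\sigma \in \Sigma_u$, hence $s\sigma \in \overline{K}\Sigma_u$, and we have just shown $s\sigma \in \overline{L}$; therefore $s\sigma \in \overline{K}\Sigma_u \cap \overline{L} \subseteq \overline{K}$ by controllability of $K$ with respect to $\overline{L}$. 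This is precisely the desired conclusion, and since $s\sigma$ was an arbitrary element of $\overline{K}\Sigma_u \cap \overline{M}$, the claimed controllability of $K$ with respect to $\overline{M}$ follows.

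I do not anticipate a genuine obstacle here: the statement is a direct transitivity of the inclusion-form definition, and the only point requiring minor care is to invoke the two hypotheses in the correct order, first lifting $s\sigma$ into $\overline{L}$ via the outer pair $(L,\overline{M})$ and then back into $\overline{K}$ via the inner pair $(K,\overline{L})$, with the prefix-closure inclusion $\overline{K}\subseteq\overline{L}$ supplying the membership needed to apply the outer hypothesis.
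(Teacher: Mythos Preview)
Your argument is correct: you unfold the inclusion form of controllability, use $\overline{K}\subseteq\overline{L}$ to feed the outer hypothesis, and then apply the inner one, exactly as one would expect for this transitivity lemma. Note that the paper does not actually supply a proof here---the lemma is quoted from \cite{automatica2011} and closed with a bare \qed---so there is nothing to compare against; your write-up is the standard direct verification.
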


  \begin{lem}[\cite{automatica2011}]\label{lem11}
    Let $L_i \subseteq \Sigma_i^*$, for $i=1,2$, and let $P_i : (\Sigma_1\cup \Sigma_2)^* \to \Sigma_i^*$ be a projection. Let $A\subseteq (\Sigma_1\cup \Sigma_2)^*$ such that $P_1(A)\subseteq L_1$ and $P_2(A)\subseteq L_2$. Then $A \subseteq L_1\parallel L_2$. 
    \qed
  \end{lem}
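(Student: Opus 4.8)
The plan is to prove the inclusion by simply unfolding the definition of the synchronous product recalled in the preliminaries, namely $L_1\parallel L_2=P_1^{-1}(L_1)\cap P_2^{-1}(L_2)$, regarded as a subset of $(\Sigma_1\cup\Sigma_2)^*$. Since the $P_i$ appearing in the statement are exactly the projections $(\Sigma_1\cup\Sigma_2)^*\to\Sigma_i^*$ used in that definition, it is enough to establish the two inclusions $A\subseteq P_1^{-1}(L_1)$ and $A\subseteq P_2^{-1}(L_2)$ separately and then take their intersection.

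First I would invoke the elementary fact that $A\subseteq P_1^{-1}(P_1(A))$ holds for any language $A$ and any projection $P_1$: every word of $A$ is mapped by $P_1$ into $P_1(A)$, and therefore lies in the $P_1$-preimage of $P_1(A)$. Combining this with the hypothesis $P_1(A)\subseteq L_1$ and the monotonicity of the inverse-image operator gives $A\subseteq P_1^{-1}(P_1(A))\subseteq P_1^{-1}(L_1)$. Running the same argument with the index $2$ gives $A\subseteq P_2^{-1}(L_2)$. Intersecting the two inclusions yields $A\subseteq P_1^{-1}(L_1)\cap P_2^{-1}(L_2)=L_1\parallel L_2$, which is the claim. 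Equivalently, one may argue pointwise: for an arbitrary $w\in A$ one has $P_1(w)\in P_1(A)\subseteq L_1$ and $P_2(w)\in P_2(A)\subseteq L_2$, so $w\in L_1\parallel L_2$ straight from the definition.

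There is essentially no obstacle in this argument; the only point that must be kept in mind is that $A$ is assumed to be a subset of the common free monoid $(\Sigma_1\cup\Sigma_2)^*$ on which both $P_1$ and $P_2$ are defined, so that the preimages $P_i^{-1}(L_i)$ and the resulting synchronous product are all taken over the same alphabet $\Sigma_1\cup\Sigma_2$. The hypothesis $P_i(A)\subseteq L_i$ for $i=1,2$ is precisely what is needed --- no nonconflictingness, controllability, or observability property enters --- which is what makes this lemma a convenient ``sufficient condition for containment in a synchronous product'' that is reused repeatedly in the paper.
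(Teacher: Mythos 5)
Your proof is correct: the paper states this lemma without proof (importing it from the cited reference), and your argument --- $A\subseteq P_i^{-1}(P_i(A))\subseteq P_i^{-1}(L_i)$ for $i=1,2$, then intersect and apply the definition $L_1\parallel L_2=P_1^{-1}(L_1)\cap P_2^{-1}(L_2)$ --- is exactly the standard one-line justification this statement admits. Nothing is missing.
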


  \begin{lem}\label{lemmaZ}
    Consider the setting of Problem~\ref{problem:controlsynthesis}, and the languages defined in (\ref{eqCRO}). Then $P_k(\supCRO_{i+k}) \subseteq \supCRO_k$, for $i=1,2$.
  \end{lem}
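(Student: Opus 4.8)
The plan is to mimic the argument used in~\cite{JDEDS} for the inclusion $P_k(\supC_{i+k})\subseteq \supC_k$, adapting it from the "controllable sublanguage" setting to the "controllable and relatively observable sublanguage" setting. Fix $i\in\{1,2\}$ and write $N=\supCRO_{i+k}=\supCRO(P_{i+k}(K),L(G_i)\parallel\overline{\supCRO_k})$. By construction, $N$ is controllable with respect to $L(G_i)\parallel\overline{\supCRO_k}$ and $\Sigma_{i+k,u}$, and $(N\cap(L(G_i)\parallel\overline{\supCRO_k}))$-observable, i.e.\ $N$-observable, with respect to the same plant and the projection $Q_{i+k}$. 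Since $\supCRO_k=\supCRO(P_k(K),L(G_k))$ is the \emph{supremal} controllable and relatively observable sublanguage of $P_k(K)$ with respect to $L(G_k)$ (and the corresponding uncontrollable events $\Sigma_{k,u}$ and projection $Q_k$), it suffices to show that $P_k(N)$ is a sublanguage of $P_k(K)$ that is (a) controllable with respect to $L(G_k)$ and $\Sigma_{k,u}$, and (b) $P_k(N)$-observable with respect to $L(G_k)$ and $Q_k$; maximality of $\supCRO_k$ then yields $P_k(N)\subseteq\supCRO_k$.

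First, $P_k(N)\subseteq P_k(P_{i+k}(K))=P_k(K)$ because $P_k=P_k\circ P_{i+k}$ on $(\Sigma_1\cup\Sigma_2)^*$ (as $\Sigma_k\subseteq\Sigma_i\cup\Sigma_k$ and $\Sigma_1\cap\Sigma_2\subseteq\Sigma_k$). For controllability (a): take $w\in\overline{P_k(N)}$ and $u\in\Sigma_{k,u}$ with $wu\in L(G_k)$. Pick $v\in\overline{N}$ with $P_k(v)=w$; then $v\in L(G_i)\parallel\overline{\supCRO_k}$, and since $P_k$ restricted to $L(G_i)\parallel\overline{\supCRO_k}$ maps onto $\overline{\supCRO_k}$, we have $w=P_k(v)\in\overline{\supCRO_k}$, so in particular $wu\in L(G_k)$ is consistent; one extends $v$ by the uncontrollable event $u$ (and possibly some private uncontrollable events of $G_i$) inside $L(G_i)\parallel\overline{\supCRO_k}$ using controllability of $N$, and projecting back gives $wu\in\overline{P_k(N)}$. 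This is exactly the observation-projection-commutes-with-controllability argument already run in~\cite{JDEDS}, and I would cite it rather than repeat it.

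For the relative-observability part (b): let $w a,w'\in\overline{P_k(N)}$ and $w'a\in L(G_k)$ with $Q_k(w)=Q_k(w')$, where $a\in\Sigma_k$; we must show $w'a\in\overline{P_k(N)}$. Choose $v\in\overline{N}$ with $P_k(v)=w$ and $va\in\overline{N}$ (possible since $a\in\Sigma_k$, so $va$ is a prefix of a word of $N$ projecting to $wa$), and choose $v'\in\overline{N}$ with $P_k(v')=w'$. Then $v'\in L(G_i)\parallel\overline{\supCRO_k}$ and $v'a\in L(G_i)\parallel\overline{\supCRO_k}$ — for the latter, $P_k(v'a)=w'a\in L(G_k)$ and $P_{i}(v'a)=P_i(v')\in P_i(L(G_i))$ since $a\notin\Sigma_i\setminus\Sigma_k$ gives no constraint, so Lemma~\ref{lem11} applies. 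We may also arrange $Q_{i+k}(v)=Q_{i+k}(v')$ by choosing the witnesses appropriately, since the private events of $G_i$ are unobservable to $Q_k$ but we only need the $Q_k$-images to agree to conclude via... here care is needed, and this is the main obstacle: $Q_k(w)=Q_k(w')$ need not force $Q_{i+k}(v)=Q_{i+k}(v')$ for arbitrary witnesses. The resolution is to observe that $N$-observability is a condition on \emph{all} pairs of $Q_{i+k}$-equivalent words, and to pick $v,v'$ as the $P_k$-preimages that additionally agree under $Q_{i+k}$ — concretely, since $Q_k=Q_{i+k}$ restricted to $\Sigma_k$ in the relevant sense, one can lift $w,w'$ to words over $\Sigma_i\cup\Sigma_k$ that contain no observable private events, so that $Q_{i+k}(v)=Q_k(w)=Q_k(w')=Q_{i+k}(v')$. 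Then $va\in\overline{N}$, $v'\in\overline{N}\subseteq\overline{P_{i+k}(K)\cap(L(G_i)\parallel\overline{\supCRO_k})}$, $v'a\in L(G_i)\parallel\overline{\supCRO_k}$, and $Q_{i+k}(v)=Q_{i+k}(v')$, so $N$-observability of $N$ gives $v'a\in\overline{N}$, whence $w'a=P_k(v'a)\in\overline{P_k(N)}$. The delicate point, which I would spell out carefully, is exactly this choice of observable-private-event-free lifts; everything else is a routine projection manipulation and an appeal to the supremality of $\supCRO_k$.
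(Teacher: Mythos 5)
Your overall strategy---showing that $P_k(\supCRO_{i+k})$ is \emph{itself} controllable and relatively observable with respect to $L(G_k)$ and then invoking supremality of $\supCRO_k$---is a genuinely different route from the paper's, and it does not go through. Both of the steps you flag as ``delicate'' are real gaps, not expository ones. For controllability: given $w\in\overline{P_k(\supCRO_{i+k})}$, $u\in\Sigma_{k,u}$ and $wu\in L(G_k)$, controllability of $\supCRO_{i+k}$ can only be applied once you know that some lift $vu$ lies in the plant $L(G_i)\parallel\overline{\supCRO_k}$; when $u\in\Sigma_i\cap\Sigma_k$ this requires $P_i(v)u\in L(G_i)$, which no hypothesis provides, and no choice of witness $v$ need satisfy it. The appeal to~\cite{JDEDS} does not repair this: what is proved there is the inclusion $P_k(\supC_{i+k})\subseteq\supC_k$, not that the projected language is itself controllable (projections do not preserve controllability in general). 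For observability: your ``resolution'' requires lifting $w$ and $w'$ to words of $\overline{\supCRO_{i+k}}$ containing no $Q_{i+k}$-observable private events of $G_i$, but $\overline{\supCRO_{i+k}}$ is a fixed language and may contain no such lifts---every word projecting to $w$ may be forced through an observable private event. So your intermediate claims (a) and (b) are unproven, and there is no reason they should hold; you are trying to establish something strictly stronger than the lemma.

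The missing idea is that one need not say anything about $P_k(\supCRO_{i+k})$ directly. From the definition in~(\ref{eqCRO}) one gets immediately $P_k(\supCRO_{i+k})\subseteq \overline{\supCRO_k}\cap P_k(K)$ (you observe the containment in $\overline{\supCRO_k}$ yourself but do not exploit it), and it then suffices to show that the larger language $\overline{\supCRO_k}\cap P_k(K)$ is controllable and $C_k$-observable with respect to $L(G_k)$. That is easy because $\supCRO_k\subseteq\overline{\supCRO_k}\cap P_k(K)\subseteq\overline{\supCRO_k}$, so the two languages have the same prefix closure and both properties transfer from $\supCRO_k$; supremality of $\supCRO_k$ then yields $\overline{\supCRO_k}\cap P_k(K)\subseteq\supCRO_k$. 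This is the paper's argument, and it sidesteps every projection and lifting difficulty your version runs into; I recommend restructuring the proof along these lines.
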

  \begin{proof}
    By definition, $P_k(\supCRO_{i+k})\subseteq \overline{\supCRO_k}\cap P_k(K)$. We prove $\overline{\supCRO_k}\cap P_k(K)\subseteq \supCRO_k$ by showing that $\overline{\supCRO_k}\cap P_k(K)$ is controllable with respect to $L(G_k)$ and $C_k$-observable with respect to $L(G_k)$, for some fixed $C_k$.
    Let $s \in \overline{\overline{\supCRO_k}\cap P_k(K)}$, $u \in \Sigma_{k,u}$, and $su \in L(G_k)$. By controllability of $\supCRO_k$, $su \in \overline{\supCRO_k}\subseteq \overline{P_k(K)}$, hence there exists $v$ such that $suv \in \supCRO_k\subseteq P_k(K)$. Hence, $suv \in \overline{\supCRO_k}\cap P_k(K)$, and $su\in \overline{\overline{\supCRO_k}\cap P_k(K)}$.
    Let $s, s'\in \Sigma^*$ and $\sigma\in \Sigma$ be such that $Q_k(s) = Q_k(s')$, $s\sigma \in \overline{\overline{\supCRO_k}\cap P_k(K)}$, $s' \in \overline{C_k}$, and $s'\sigma \in L(G_k)$. By $C_k$-observability of $\supCRO_k$, $s'\sigma \in \overline{\supCRO_k}$, and similarly as above we show that $s'\sigma \in \overline{\overline{\supCRO_k}\cap P_k(K)}$. 
  \end{proof}

\section{Conclusion}\label{conclusion}
  In this paper, we have introduced and studied the notion of conditional relative observability, and a coordinated computation of a conditionally controllable and conditionally observable sublanguage that contains the supremal conditionally controllable and conditionally strong relative observable sublanguage of the specification language. It is worth mentioning that there exist conditions, namely the observer and OCC (or LCC) properties, that can be fulfilled by a modification of the coordinator event set, and that imply that the assumptions for controllability of Theorem~\ref{thm25} are satisfied. On the other hand, however, to the best of our knowledge, there are no known conditions that could be fulfilled by a simple action on the event sets of the coordinator, so that it would make the conditions for relative observability of Theorem~\ref{thm25} satisfied. This is an interesting topic for the future investigation.

\begin{ack}
  This research was supported by the M\v{S}MT grant LH13012 (MUSIC) and by RVO: 67985840.
\end{ack}

\bibliographystyle{plain}
\bibliography{cdc2014}

\begin{thebibliography}{10}

\bibitem{caiCDC13}
K.~Cai, R.~Zhang, and W.~M. Wonham.
\newblock On relative observability of discrete-event systems.
\newblock In {\em Proc. of CDC 2013}, pages 7285--7290, Florence, Italy, 2013.

\bibitem{CL08}
C.~G. Cassandras and S.~Lafortune.
\newblock {\em Introduction to discrete event systems, Second edition}.
\newblock Springer, 2008.

\bibitem{FLT}
L.~Feng.
\newblock {\em Computationally Efficient Supervisor Design for Discrete-Event
  Systems}.
\newblock PhD thesis, University of Toronto, 2007.

\bibitem{ifacwc2011}
J.~Komenda, T.~Masopust, and J.~H. van Schuppen.
\newblock Coordinated control of discrete event systems with non\-prefix-closed
  languages.
\newblock In {\em Proc. of IFAC World Congress 2011}, pages 6982--6987, Milano,
  Italy, 2011.

\bibitem{SCL12}
J.~Komenda, T.~Masopust, and J.~H. van Schuppen.
\newblock On conditional decomposability.
\newblock {\em Systems Control Lett.}, 61(12):1260--1268, 2012.

\bibitem{automatica2011}
J.~Komenda, T.~Masopust, and J.~H. van Schuppen.
\newblock Supervisory control synthesis of discrete-event systems using a
  coordination scheme.
\newblock {\em Automatica}, 48(2):247--254, 2012.

\bibitem{JDEDS}
J.~Komenda, T.~Masopust, and J.~H. van Schuppen.
\newblock Coordination control of discrete-event systems revisited.
\newblock {\em Discrete Event Dyn. Syst.}, 2014.
\newblock to appear, DOI: 10.1007/s10626-013-0179-x.

\bibitem{cdc2014}
J.~Komenda, T.~Masopust, and J.~H. van Schuppen.
\newblock Maximally permissive coordination supervisory control -- towards
  necessary and sufficient conditions.
\newblock Submitted manuscript. [Online]. Available at {\tt
  http://arxiv.org/abs/1403.4762}, 2014.

\bibitem{KvS08}
J.~Komenda and J.~H. van Schuppen.
\newblock Coordination control of discrete event systems.
\newblock In {\em Proc. of WODES 2008}, pages 9--15, Gothenburg, Sweden, 2008.

\bibitem{RW89}
P.~J. Ramadge and W.~M. Wonham.
\newblock The control of discrete event systems.
\newblock {\em Proc. of IEEE}, 77(1):81--98, 1989.

\end{thebibliography}

\end{document}